\def\eqnreset{\setcounter{equation}{0}}
\def\eqsection#1{\section{#1}\eqnreset}
\newtheorem{Thm}{Theorem}[section]
\newtheorem{Defi}[Thm]{Definition}
\newtheorem{Cor}[Thm]{Corollary}
\newtheorem{Lemma}[Thm]{Lemma}
\newtheorem{Prop}[Thm]{Proposition}
\newtheorem{Rem}[Thm]{Remark}
\newtheorem{Conj}[Thm]{Conjecture}
\newtheorem{Prelim}[Thm]{Preliminary}
\newenvironment{thm}[0]{\begin{Thm}\noindent}%
{\end{Thm}}
\newenvironment{defi}[0]{\begin{Defi}\noindent\rm}%
{\end{Defi}}
\newenvironment{cor}[0]{\begin{Cor}\noindent}%
{\end{Cor}}
\newenvironment{lemma}[0]{\begin{Lemma}\noindent}%
{\end{Lemma}}
{\end{Prop}}
{\end{Rem}}
{\end{Conj}}
{\end{Prelim}}
\def\qed{~\hfill$\square$\medbreak}
\def\text#1{\;\;\;\;{\rm \hbox{#1}}\;\;\;\;}
\def\qquad{\quad\quad}
\def\msy#1{{\mathbb #1}}
\def\C{{\msy C}}
\def\N{{\msy N}}
\def\Z{{\msy Z}}
\def\gl{\lambda}
\def\gL{\Lambda}
\def\frak#1{\mathfrak #1}
\def\fa{{\frak a}}
\def\fg{{\frak g}}
\def\fk{{\frak k}}
\def\fn{{\frak n}}
\def\fp{{\frak p}}
\def\fq{{\frak q}}
\def\fu{{\frak u}}
\def\to{\rightarrow}
\def\Re{{\rm Re}\,}
\def\Ad{{\rm Ad}}
\def\Hom{{\rm Hom}}
\def\cA{{\mathcal A}}
\def\cH{{\mathcal H}}
\def\cU{{\mathcal U}}
\def\rmS{{\rm S}}
\def\rmU{{\rm U}}
\def\PW{{\rm PW}}
\def\Exp{{\rm Exp}}
\def\supp{\mathrm s\mathrm u\mathrm p\mathrm p}
\newcommand{\Ee}{\mathcal{V}}
\newcommand{\Eea}{\mathcal{V}^a}
\newcommand{\Cr}{\mathrm{Cr}(G/K_0)}
\def\sideremark#1{\ifvmode\leavevmode\fi\vadjust{\vbox to0pt{\vss
 \hbox to 0pt{\hskip\hsize\hskip1em%
 \vbox{\hsize2cm\tiny\raggedright\pretolerance10000
 \noindent #1\hfill}\hss}\vbox to8pt{\vfil}\vss}}}
\begin{document}
\setcounter{section}{0}
\title[Fourier series on compact symmetric spaces]{Fourier series on compact symmetric spaces: K-finite
functions of small support}

\author{Gestur \'Olafsson}
\thanks{Research of \'Olafsson was supported by NSF grants DMS-0402068
and  DMS-0801010}
\address{Department of Mathematics, Louisiana State University, Baton Rouge,
LA 70803, U.S.A.}
\email{olafsson@math.lsu.edu}
\author{Henrik Schlichtkrull}
\address{Department of Mathematics, University of Copenhagen, Universitetsparken 5,
DK-2100 K{\o}benhavn {\O}, Denmark}
\email{schlicht@math.ku.dk}

\subjclass{ 43A85,  53C35, 22E46}
\keywords{Symmetric space; Fourier transform; Paley-Wiener theorem}

\begin{abstract}
The Fourier coefficients of a function $f$
on a compact symmetric space
$U/K$ are given by integration of $f$ against
matrix coefficients of irreducible representations of $U$.
The coefficients depend on a spectral parameter $\mu$,
which determines the representation, and they can be
represented by elements $\widehat f(\mu)$
in a common Hilbert space $\cH$.

We obtain a theorem of Paley-Wiener type which
describes the size of the support of $f$
by means of the exponential type
of a holomorphic $\cH$-valued extension of $\widehat f$,
provided $f$ is $K$-finite and of sufficiently small support.
The result was obtained previously
for $K$-invariant functions, to which case we reduce.
\end{abstract}
\maketitle

\eqsection{Introduction.}
\noindent
The present paper is a continuation of our article  \cite{OS}.
We consider a Riemannian symmetric space $X$ of compact type,
realized as the homogeneous space $U/K$ of a compact Lie group $U$.
Up to covering, $U$ is the connected component of the group
of isometries of $X$. As an example,
we mention the sphere $S^n$, for which $U={\rm SO}(n+1)$
and $K={\rm SO}(n)$. In the cited paper, we considered
$K$-invariant functions on $U/K$. The Fourier series of
a $K$-invariant function $f$ is
\begin{equation}
\label{e: K-invariant Fourier series}
\sum_{\mu}a_\mu\psi_\mu(x),
\end{equation}
where $\psi_\mu$
is the zonal spherical function
associated with the representation of $U$ with highest weight
$\mu$, and where the Fourier coefficients $a_\mu$ are given by
\begin{equation}\label{eq-sphericalFT}
a_\mu= d(\mu)\tilde f(\mu)=
d(\mu)\int_{U/K} f(x)\overline{\psi_\mu(x)}\,dx,
\end{equation}
with $d(\mu)$ being the representation dimension,
and $dx$ being the normalized invariant  measure on $U/K$.
The main result of \cite{OS} is a local Paley-Wiener
theorem, which gives a necessary and sufficient condition
on the coefficients in the series
(\ref{e: K-invariant Fourier series})
that it is the Fourier series of a smooth
$K$-invariant function $f$
supported in a geodesic ball of a given
sufficiently small radius $r$ around the
origin in $U/K$. The condition is, that $\mu \mapsto a_\mu$
extends to a holomorphic function of
exponential type $r$ satisfying certain invariance under
the action of
the Weyl group.
We refer to \cite{BOP,BOP2,C06,G01} for previous results on special cases.
The case of distributions will be treated in \cite{OS08b}.

In the present paper we consider the general case where
the $K$-invariance is replaced by
$K$-finiteness. Instead of being scalars, the Fourier
coefficients take values in the Hilbert space $\cH=L^2(K/M)$,
where $M$ is a certain subgroup of $K$.
In case of $U/K=S^n$, we have $K/M=S^{n-1}$.
Our main result is Theorem \ref{t: main} below, which describes
the set of Fourier coefficients of $K$-finite smooth functions
on $U/K$, supported in a ball of a given sufficiently small radius.
The corresponding result for Riemannian
symmetric spaces of the non-compact type is due
to Helgason, see \cite{Sig73}.

Our method is by reduction to the $K$-invariant case.
For the reduction we
use Kostant's description of the spherical principal series
of a semisimple Lie group \cite{Kostant}. A similar reduction
was found by Torasso \cite{Torasso} for Riemannian symmetric
spaces of the non-compact type, thus providing an alternative
proof of the mentioned theorem of Helgason.

\eqsection{Basic notation}
\label{s: notation}\noindent
We recall some basic notation from \cite{OS}.
We are considering a Riemannian symmetric space $U/K$, where
$U$ is a connected compact semisimple Lie group and
$K$ a closed symmetric subgroup. By definition this means
that there exists a nontrivial  involution $\theta$ of $U$ such that
$K_0\subset K\subset U^\theta$. Here $U^\theta$ denotes the
subgroup of $\theta$-fixed points, and
$K_0:=U^\theta_0$ its identity component. The
base point in $U/K$ is denoted by $x_0=eK$.

The Lie algebra of $U$ is denoted $\fu$, and by
$\fu=\fk\oplus\fq$ we denote the
Cartan decomposition associated with the involution $\theta$.
We endow $U/K$ with the Riemannian structure induced
by the negative of the Killing form on $\fq$.

Let $\fa\subset\fq$ be a maximal abelian subspace,
$\fa^*$ its dual space, and $\fa^*_\C$ the complexified
dual space. The set of non-zero weights for $\fa$ in $\fu_\C$
is denoted by $\Sigma$. The roots $\alpha\in\Sigma\subset\fa^*_\C$
are purely imaginary valued on $\fa$.
The corresponding Weyl group, generated by the reflections
in the roots, is denoted $W$. We make a fixed choice of
a positive system $\Sigma^+$ for $\Sigma$, and define
$\rho\in i\fa^*$ to be half the sum of the roots in $\Sigma^+$,
counted with multiplicities.
The centralizer of $\fa$ in $K$ is denoted $M=Z_K(\fa)$.

Some care has to be taken because we are not assuming $K$
is connected. We recall that if $U$ is simply connected,
then $U^\theta$ is connected and $K=K_0$, see \cite{Sig}, p. 320.
We recall also that in general $K=MK_0$, see \cite{OS}, Lemma 5.2.

In the following we shall need to complexify $U$ and $U/K$.
Since $U$ is compact there exists a unique (up to isomorphism)
connected complex Lie group $U_\C$ with Lie algebra $\fu_\C$
which contains $U$ as a real Lie subgroup.
Let $\fg$ denote the real form $\fk+i\fq$ of $\fu_\C$,
and let $G$ denote the connected real Lie subgroup of $U_\C$
with this Lie algebra.
Then $\fg_\C=\fu_\C$ as complex vector
spaces, and $U_\C$ complexifies $G$ as well as $U$. In particular,
the almost complex structures that $\fu$ and $\fg$ induce
on $U_\C$ are identical. For this reason we shall
denote $U_\C$ also by $G_\C$.
The Cartan involutions of $\fu$ and $U$
extend to involutions of $\fg_\C$ of $G_\C$, which we shall denote
again by $\theta$, and which leave $\fg$ and $G$ invariant.
The corresponding Cartan decomposition of $\fg$ is
$\fg=\fk+i\fq$. It follows that $K_0=G^\theta$ is maximal compact in $G$,
and $G/K_0$ is a Riemannian symmetric space of the non-compact type.

We denote by $\fg=\fk\oplus i\fa\oplus \fn$ and
$G=K_0AN$ the Iwasawa decompositions of $\fg$ and $G$
associated with $\Sigma^+$. Here $A=\exp(i\fa)$ and
$N=\exp\fn$.
Furthermore, we
let $H\colon G\to i\fa$ denote the {\it Iwasawa projection}
$$K_0AN\ni k\exp Y n=g\mapsto H(g)=Y.$$

Let $K_{0\C}$, $A_\C$, and $N_\C$ denote the connected subgroups
of $G_\C$ with Lie algebras $\fk_\C$, $\fa_\C$ and $\fn_\C$,
and put $K_\C=K_{0\C}K$. Then  $G_\C/K_\C$
is a symmetric space, and it
carries a natural complex structure with respect to which
$U/K$ and $G/K_0$ are totally real submanifolds
of maximal dimension.

\begin{lemma}\label{l: complex Iwasawa map}
There exists an open $K_\C\times K$-invariant
neighborhood $\Eea$ of the neutral element
$e$ in $G_\C$, and a holomorphic map
\begin{equation}\label{e: complex H}
H\colon \Eea\to\fa_\C,
\end{equation}
which agrees with the Iwasawa projection on $\Eea\cap G$,
such that
\begin{equation}\label{e: Iwa}
u\in K_\C\exp(H(u))N_\C
\end{equation}
for all $u\in\Eea$.
\end{lemma}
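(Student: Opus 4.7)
My strategy is to apply the holomorphic inverse function theorem to the complexified infinitesimal Iwasawa decomposition, verify that the resulting map $H$ agrees with the real Iwasawa projection on the intersection with $G$, and finally enlarge the domain to a $K_\C\times K$-invariant neighborhood by a compactness argument.

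Complexifying the real Iwasawa decomposition $\fg=\fk\oplus i\fa\oplus\fn$ and using $(i\fa)_\C=\fa_\C$ gives $\fg_\C=\fk_\C\oplus\fa_\C\oplus\fn_\C$. The holomorphic multiplication
\begin{equation*}
\mu\colon K_{0\C}\times\fa_\C\times N_\C\to G_\C,\qquad (k,X,n)\mapsto k\exp(X)n,
\end{equation*}
then has $\C$-linear bijective differential at $(e,0,e)$, equal to the direct-sum isomorphism. The holomorphic inverse function theorem produces a biholomorphism from an open neighborhood of $(e,0,e)$ onto an open neighborhood $V$ of $e$ in $G_\C$, whose three component projections furnish holomorphic maps $\kappa_0,H,\tilde n$ on $V$ with $v=\kappa_0(v)\exp(H(v))\tilde n(v)$. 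For $g\in V\cap G$ the real Iwasawa decomposition $g=k\exp(Y)n_1$ with $k\in K_0\subset K_{0\C}$, $Y\in i\fa\subset\fa_\C$, $n_1\in N\subset N_\C$ is a second factorization of $g$ in the image of $\mu$; after possibly shrinking $V$ so that $\mu$ is injective on its preimage, uniqueness forces $H(g)=Y$, so $H$ extends the real Iwasawa projection.

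For the $K_\C\times K$-invariant extension I would exploit that $K_\C=K_{0\C}M$ (from $K=MK_0$ and $K_0\subset K_{0\C}$) has Lie algebra $\fk_\C$, so the same infinitesimal direct sum makes the enlarged multiplication $K_\C\times\fa_\C\times N_\C\to G_\C$ a local biholomorphism at $(e,0,e)$. Hence near $e$ the $\fa_\C$-exponent of an Iwasawa factorization $u=k\exp(X)n$ with $k\in K_\C$, $n\in N_\C$ is uniquely determined by $u$ and depends holomorphically on it. Because $K$ is compact and $K\subset K_\C\subset K_\C A_\C N_\C$, a tube-lemma argument yields an open neighborhood $\Eea$ of $e$ which is $K_\C\times K$-invariant, is contained in the region of local uniqueness, and satisfies $\Eea K\subset K_\C A_\C N_\C$. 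Defining $H(u)$ on $\Eea$ to be the unique $\fa_\C$-exponent of $u$ then extends the earlier $H$ and satisfies $u\in K_\C\exp(H(u))N_\C$ by construction.

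The main obstacle is this last step: securing a $K_\C\times K$-invariant open neighborhood on which the $A_\C$-component of the Iwasawa decomposition is simultaneously single-valued and holomorphic. Local single-valuedness near $e$ rests on the discreteness of $K_\C\cap A_\C N_\C$ at the identity, a consequence of $\fg_\C=\fk_\C\oplus\fa_\C\oplus\fn_\C$; the simultaneous $K_\C\times K$-invariance at a positive radius requires the tube-lemma argument applied with $K$ compact, together with the observation that $M$ centralizes $A_\C$ and normalizes $N_\C$, which allows one to move the $M$-part of $k_2\in K=MK_0$ across the $A_\C N_\C$ factor without leaving the Iwasawa-decomposable region.
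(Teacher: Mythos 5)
Your first two steps are sound and essentially parallel the paper's: both exploit the direct sum $\fg_\C=\fk_\C\oplus\fa_\C\oplus\fn_\C$ to get a local holomorphic Iwasawa factorization near $e$, and both identify $H|_{G}$ with the real Iwasawa projection (you via shrinking for uniqueness, the paper via invariance under conjugation with respect to $\fg$ — both are fine). The genuine gap is in the final enlargement step. You invoke a tube-lemma/compactness argument to produce a neighborhood that is simultaneously \emph{left $K_\C$-invariant} and contained in a region of local uniqueness. But $K_\C$ is a noncompact complex group, and no compactness argument can produce a $K_\C$-invariant open set sitting inside a small neighborhood of $e$; the tube lemma only helps with the compact factor $K$ on the right. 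Your closing paragraph acknowledges the difficulty but does not actually resolve it — discreteness of $K_\C\cap A_\C N_\C$ near $e$ gives local uniqueness of the $\fa_\C$-exponent for elements near the identity, but says nothing about elements $k u$ with $k\in K_\C$ far from $e$.

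The paper avoids this by working in the quotient. One first shows that $(X,Y)\mapsto\exp X\exp Y\cdot x_0$ is a biholomorphism of a small product $T_{\fn_\C}\times T_{\fa_\C}$ onto an open set $\Ee\subset G_\C/K_\C$, and then takes $\Eea:=K_\C\exp(T_{\fa_\C})\exp(T_{\fn_\C})$ (the preimage of $\Ee$ under $g\mapsto g^{-1}K_\C$). This set is left $K_\C$-invariant by construction, and — crucially — the map $K_\C\times T_{\fa_\C}\times T_{\fn_\C}\to\Eea$ is a \emph{global} biholomorphism, because injectivity descends from the quotient biholomorphism rather than from the inverse function theorem alone. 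This is exactly what gives a well-defined holomorphic $H$ on all of $\Eea$, not just near $e$. Only then does compactness of $K$ enter, in the form $\bigcap_{k\in K}\Eea k$, to secure the right $K$-invariance. To repair your argument you would need to replace ``local uniqueness plus tube lemma'' with either the quotient construction or an explicit proof that the $\fa_\C$-component is single-valued on $K_\C\cdot V$ for $V$ small — which is essentially equivalent to passing through $G_\C/K_\C$.
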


\begin{proof}(See \cite{clerc} or \cite{Stanton}.)
We first assume that $K=U^\theta$. Then $K_\C=G_\C^\theta$.
Since $\fg_\C=\fn_\C\oplus\fa_\C\oplus\fk_\C$,
there exist an open
neighborhood $T_{\fn_\C}\times T_{\fa_\C}$ of $(0,0)$
in $\fn_\C\times\fa_\C$ such that
the map
$$
\fn_\C\times\fa_\C\ni(X,Y)\mapsto
\exp X\exp Y\cdot x_0\in G_\C/K_\C
$$
is a biholomorphic diffeomorphism of
$T_{\fn_\C}\times T_{\fa_\C}$
onto an open neighborhood $\Ee$ of
$x_0=eK_\C$ in $G_\C/K_\C$. We assume, as we may, that
$T_{\fn_\C}$ and $T_{\fa_\C}$ are invariant under the
complex conjugation with respect to the real form $\fg$.

We denote by $\Eea$
the open set $\{x\mid x^{-1} K_\C\in \Ee\}\subset G_\C$.
The map
$$K_\C\times T_{\fa_\C}\times T_{\fn_\C}\ni(k,Y,X)\mapsto
k\exp Y\exp X\in \Eea\subset G_\C$$
is then a biholomorphic diffeomorphism.

In particular, the map
$H\colon \Eea\to\fa_\C$ defined by
$$k\exp Y\exp X\mapsto Y$$
for $k\in K_\C$, $Y\in T_{\fa_\C}$ and $X\in T_{\fn_\C}$,
is holomorphic and satisfies (\ref{e: Iwa}).

The conjugation with respect to $\fg$ lifts to an
involution of $G_\C$ that leaves $G$ pointwise fixed.
Moreover, since this conjugation commutes with $\theta$,
it stabilizes $K_\C$. Hence it stabilizes $\Eea$.
Let  $u\in\Eea\cap G$
and write $u=k\exp Y\exp X$ with
$k\in K_\C$, $Y\in T_{\fa_\C}$ and $X\in T_{\fn_\C}$.
It follows that
$k$, $Y$ and $X$ are fixed by the conjugation.
In particular, $Y\in i\fa$ and $X\in \fn$, and
hence $k=u\exp(-X)\exp(-Y)\in G\cap K_\C=K_0$.
Therefore, $u=k\exp Y\exp X$ is the Iwasawa decomposition,
and $H(u)=Y$ the Iwasawa projection, of $u$.

We postpone the condition of right-$K$-invariance and
consider the general case where $K_0\subset K\subset
U^\theta$. We retain the
sets $T_{\fn_\C}$ and $T_{\fa_\C}$
from above and recall that $K_\C=K_{0\C}K$ is an open subgroup
of the previous $K_\C$.
Again we define
$\Eea=K_{\C}\exp(T_{\fa_\C})\exp(T_{\fn_\C})$.
This is an open subset of the previous $\Eea$.
The restriction of the previous $H$ to this set
is obviously holomorphic, agrees with Iwasawa on $\Eea\cap G$,
and it is easily seen to satisfy (\ref{e: Iwa}).

Finally, we note that $\Eea$ contains an $\Ad K$
invariant open neighborhood $V$ of $e$ in $G_\C$. Hence,
for each $k\in K$, the set $\Eea k$ is left-$K_\C$-invariant
and contains $V$.
The intersection $\cap_{k\in K} \Eea k$ is $K_\C\times K$
invariant and contains $V$. The interior of this set has
all the properties requested of $\Eea$.
\qed\end{proof}

We call the map in (\ref{e: complex H}) the
{\it complexified Iwasawa projection}.
A particular set $\Eea$ as above can be constructed as follows.
Let
$$\Omega =\{X\in \fa \mid
( \forall \alpha\in\Sigma)\,\, |\alpha (X)|<\pi/2\}.$$
The set
$$\Ee=\Cr=G\,\exp \Omega\, K_\C\subset G_\C/K_\C,$$
called the {\it complex crown} of $G/K_0$,
was introduced in
\cite{AG90}.
Its preimage in $G_\C$
is open and contained in
$N_\C A_\C K_\C\subset G_\C$.
This is shown for all classical groups in \cite{KrSt}, Theorem 1.8,
and in general in \cite{H02}, Theorem 3.21. See also \cite{GM},
\cite{M03}.
Let $\Eea=\{x^{-1}\mid x\in \Ee\}\subset G_\C$.
The existence of the holomorphic Iwasawa projection $\Eea\to\fa_\C$ is
established in \cite{KrSt}, Theorem 1.8, with a proof that
can be repeated in the general case.
It follows that $\Eea$
has all the properties mentioned in Lemma \ref{l: complex Iwasawa
  map}.

One important property of the crown is
that it is $G$-invariant and that all the spherical functions on
$G/K$ extends to a holomorphic function on the crown (it is in fact maximal with this
property, see \cite{kos}, Theorem 5.1).
However, this property plays no role in the present article,
where we shall just assume that $\Eea$ has the properties in
Lemma \ref{l: complex Iwasawa map}, and $\Ee=(\Eea)^{-1}$.

\eqsection{Fourier analysis}
\label{s: Fourier analysis}\noindent
In this section
we develop a local Fourier theory for
$U/K$ based on elementary representation
theory. The theory essentially
originates from Sherman \cite{ShermanBull}.

An irreducible unitary representation $\pi$ of $U$ is
said to be {\it spherical} if there exists a non-zero
$K$-fixed vector $e_\pi$ in the representation space $V_\pi$.
The vector $e_\pi$ (if it exists) is unique up to multiplication
by scalars. After normalization to unit length we obtain
the matrix coefficient
$$\psi_\pi(u)=\langle \pi(u)e_\pi,e_\pi\rangle$$
which is the corresponding \textit{zonal spherical function}.

{}From the point of view of
representation theory it is natural to define the
Fourier transform of an integrable function $f$ on
$U/K$ to be the map that associates the vector
$$\pi(f) e_\pi
= \int_{U} f(u\cdot x_0) \pi(u)e_{\pi} \, du
=\int_{U/K} f(x) \pi(x)e_{\pi} \, dx\in V_\pi,$$
to each spherical representation, with a fixed choice of
the unit vector $e_\pi$ for each $\pi$
(see \cite{OS08a} for discussion on the noncompact case).
The corresponding Fourier series is
\begin{equation}
\label{Fourier series}
\sum_\pi d(\pi)\,
\langle \pi(f)e_\pi ,\pi(x)e_\pi \rangle
\end{equation}
for $x\in U/K$.
It converges to $f$ in $L^2$ if $f$ belongs to $L^2(U/K)$, and it
converges uniformly if $f$ has a sufficient number of continuous
derivatives (see \cite{taylor}).

In the case of the sphere $S^2$, the expansion of $f$
in spherical harmonics $Y^m_l(x)$ (with integral labels $|m|\leq l$)
is obtained from this expression when we
express $\pi(x)e_\pi$ by means of
an orthonormal basis for the
$(2l+1)$-dimensional representation space $V_\pi=V_l$.

For the purpose of Fourier analysis it is convenient
to embed all the representation spaces $V_\pi$,
where $\pi$ is spherical, in a common
Hilbert space $\cH$, independent of $\pi$, such that
$\widehat f$ can be viewed as an $\cH$-valued function on the set of
equivalence classes of irreducible spherical representations.
This can be achieved as follows.

Recall that in the classification of Helgason,
a spherical representation $\pi=\pi_\mu$
is labeled by an element $\mu\in \fa^*_\C$,
which is the restriction, from a compatible maximal torus,
of the highest weight of $\pi$ (see \cite{GGA}, p. 538).
We denote by $\gL^+(U/K)\subset\fa^*_\C$ the set of
these restricted highest weights, so that $\mu\mapsto\pi_\mu$
sets up a bijection from $\gL^+(U/K)$ onto the set of
equivalence classes of irreducible $K$-spherical
representations.
According to the theorem of
Helgason, every $\mu\in\Lambda^+(U/K)$
satisfies
\begin{equation}
\label{e: Helgason condition}
\frac{\langle\mu,\alpha\rangle}{\langle\alpha,\alpha\rangle}
\in\Z^+,
\end{equation}
for all $\alpha\in\Sigma^+$, where the brackets
denote the inner product induced by the Killing form. Furthermore,
if $U$ is simply connected, then an element
$\mu\in\fa^*_\C$ belongs to $\gL^+(U/K)$ if
and only if it satisfies (\ref{e: Helgason condition}).
For the description in the general case, one must
supplement (\ref{e: Helgason condition}) by
both the assumption that $\pi_\mu$ descends to $U$,
and that the $K_0$-fixed vector is also $K$-fixed.

For each $\mu\in\gL^+(U/K)$ we fix an irreducible unitary
spherical representation
$(\pi_\mu,V_\mu)$ of $U$
and  a unit $K$-fixed vector $e_\mu\in V_\mu$.
Furthermore, we fix a  highest weight vector $v_\mu$
of weight $\mu$, such that $\langle v_\mu,e_\mu\rangle=1$.
The following lemma is proved in \cite{GGA}, p. 535,
in the case that $U$ is simply connected.

\begin{lemma}
\label{l: K-span}
Let $\mu\in\gL^+(U/K)$. Then
$\pi_\mu(m)v_\mu=v_\mu$ for all $m\in M$, and the
vectors
$\pi_\mu(k) v_\mu$, where $k\in K_0$, span the space $V_\mu$.
\end{lemma}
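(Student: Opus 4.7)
The two assertions call for rather different arguments, which I would address in turn.

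For the $M$-invariance of $v_\mu$, I start from the Cartan--Helgason structure of $K_0$-spherical representations. After extending $\Sigma^+$ to a compatible positive system on a maximal torus $\ft\supset\fa$ with $\ft=\fa\oplus\ft_M$, the full highest weight $\tilde\mu$ of $\pi_\mu$ vanishes on $\ft_M$, and therefore the top $\fa$-weight space in $V_\mu$ coincides with the one-dimensional top $\ft$-weight space; in particular it is spanned by $v_\mu$. Since $M$ centralizes $\fa$, the line $\C v_\mu$ is $M$-stable and $M$ acts on it by a character $\chi\colon M\to\C^\times$. To show $\chi\equiv 1$ I would pair with $e_\mu$: the unitarity of $\pi_\mu|_U$, the normalization $\inp{v_\mu}{e_\mu}=1$, and the $K$-invariance of $e_\mu$ (built into the definition of $\gL^+(U/K)$) yield
\[
\chi(m)=\inp{\pi_\mu(m)v_\mu}{e_\mu}=\inp{v_\mu}{\pi_\mu(m^{-1})e_\mu}=\inp{v_\mu}{e_\mu}=1
\]
for every $m\in M$, so $\pi_\mu(m)v_\mu=v_\mu$.

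For the $K_0$-cyclicity I would combine Poincar\'e--Birkhoff--Witt with the Iwasawa decomposition at the Lie-algebra level. Since $\fu_\C=\fg_\C=\fk_\C\oplus\fa_\C\oplus\fn_\C$ as a vector space, choosing a PBW basis adapted to this decomposition gives $\cU(\fu_\C)=\cU(\fk_\C)\cdot\cU(\fa_\C)\cdot\cU(\fn_\C)$. Because $\mu$ is the top $\fa$-weight, $v_\mu$ is annihilated by $\fn_\C$ and is an $\fa_\C$-eigenvector, hence $\cU(\fa_\C)\cU(\fn_\C)v_\mu=\C v_\mu$. Consequently $\cU(\fk_\C)v_\mu=\cU(\fu_\C)v_\mu=V_\mu$, the last equality by irreducibility of $\pi_\mu$. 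Since $K_0$ is connected, $\pi_\mu(K_0)v_\mu$ and $\cU(\fk_\C)v_\mu$ have the same linear span, so the $K_0$-translates of $v_\mu$ span $V_\mu$.

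The main subtlety, if any, is ensuring the Cartan--Helgason input transfers to our setting, where $U$ is not assumed simply connected and $K$ may be strictly larger than $K_0$. The one-dimensionality of the top $\fa$-weight space depends only on $\pi_\mu$ being $K_0$-spherical and is therefore available without modification, but the conclusion $\chi\equiv 1$ genuinely uses the extra requirement in the definition of $\gL^+(U/K)$ that the $K_0$-fixed line is actually $K$-fixed. Once this point is recognized, both parts of the proof are formal, and no passage to a finite cover is required.
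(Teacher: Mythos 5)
Your proof is correct and follows essentially the same route as the paper: for the $M$-fixedness you show $\C v_\mu$ is $M$-stable and then evaluate the resulting character against the $M$-fixed vector $e_\mu$ via unitarity, exactly as the paper does (the paper phrases stability via $M$ normalizing $\fn$, you via $M$ centralizing $\fa$ together with one-dimensionality of the top $\fa$-weight space, but these amount to the same Cartan--Helgason fact); and for the cyclicity you apply the Iwasawa decomposition, which the paper invokes at the group level $G=K_0AN$ and you at the enveloping-algebra level $\cU(\fu_\C)=\cU(\fk_\C)\cU(\fa_\C)\cU(\fn_\C)$, two equivalent formulations. Your closing remark correctly pinpoints the only delicate point, namely that $e_\mu$ must be $K$-fixed (not merely $K_0$-fixed), which is guaranteed by the definition of $\gL^+(U/K)$.
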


\begin{proof} Let $m\in M$ be given.
Since $m$ centralizes $\fa$ and normalizes
$\fn$, it follows that $\pi_\mu(m)v_\mu$ is again
a highest weight vector of the same weight.
Hence $\pi_\mu(m)v_\mu=c v_\mu$. By taking
inner products with $e_\mu$, which is $M$-fixed,
it follows that $c=1$.
The statement about the span follows directly from the
Iwasawa decomposition $G=K_0AN$.\qed
\end{proof}

It follows from Lemma \ref{l: K-span} that
the map $V_\mu \to L^2(K/M)$, $v\mapsto \langle v , \pi_\mu(\cdot )v_\mu
\rangle$, is injective.
We shall use the space $\cH=L^2(K/M)$ as our common model
for the spherical representations. It will be convenient
to use an anti-linear embedding of $V_\mu$. Hence we
define for $\mu\in\gL^+(U/K)$
\begin{equation}\label{eq-hv}
h_v(k)=\langle\pi_\mu(k)v_\mu,v\rangle,\quad (k\in K)
\end{equation}
and $\cH_\mu=\{h_v\mid v\in V_\mu\}$.
Then $v\mapsto h_v$ is a $K$-equivariant anti-isomorphism
$V_\mu\to\cH_\mu\subset\cH$.

Notice that $h_{e_\mu}=1$, the constant function on $K/M$.
Hence $1$ belongs to $\cH_\mu$ for all $\mu\in\gL^+(U/K)$.
Although we shall not use it in the sequel, we also note
that every $K$-finite function in $\cH=L^2(K/M)$ belongs to
$\cH_\mu$ for some $\mu$ (this can be seen from results
explained below, notably Lemma \ref{l: embedding} and
equation (\ref{e: Kostant for psi}), where for a given $K$-type
$\delta$ one chooses $\mu$ such that $P(-\mu-\rho)$
is non-singular).

According to the chosen embedding of $V_\mu$ in $\cH$, we define
the {\it Fourier transform} of an integrable function $f$
on $U/K$ by $$\tilde f(\mu)=
\int_{U/K} f(u)\, h_{\pi_\mu(u)e_\mu} \,du\in\cH$$
for $\mu\in\gL^+(U/K)$, that is
\begin{equation}
\tilde f(\mu,b)=
\int_{U/K} f(u)\, \langle\pi_\mu(k)v_\mu ,\pi_\mu(u)e_\mu\rangle \,du,
\label{e: Fourier transform}
\end{equation}
for $b=kM\in K/M$.
If $f$ is $K$-invariant, then $\widetilde{f} (\mu) $ is
independent of~$b$. Integration
over $K$ then shows that this definition
agrees with the spherical Fourier transform in (\ref{eq-sphericalFT}).

It is easily seen that the Fourier transform $f\mapsto \widetilde{f}(\mu )$
is intertwining for the left regular actions of  $K$ on $U/K$
and $K/M$, respectively. In particular, it maps $K$-finite
functions on $U/K$ to $K$-finite functions on $K/M$.

We now invoke the complex group $G_\C$
and the complexified Iwasawa projection defined
in the preceding section. Let $\Eea\subset G_\C$
and $H\colon\Eea\to\fa_\C$ be as in Lemma
\ref{l: complex Iwasawa map}, and let $\mu\in\gL^+(U/K)$.
Since $\pi_\mu$ extends to a holomorphic representation
of $G_\C$, it follows from Lemma \ref{l: complex Iwasawa map} that
$\langle\pi_\mu(u)v_\mu,e_\mu\rangle=e^{\mu(H(u))}$
for all $u\in \Eea$.
Let $\Ee=\{x^{-1}\mid x\in \Eea\}\subset G_\C$. Then
\begin{equation}\label{e: Iwasawa expression}
\langle \pi_\mu(k)v_\mu, \pi_\mu(u)e_\mu\rangle
=e^{\mu(H(u^{-1}k))}
\end{equation}
for $k\in K$, $u\in U\cap\Ee$ and $\mu\in\Lambda^+(U/K)$.

\begin{lemma}
\label{l: holo ext}
Let $f$ be an integrable function on $U/K$ with
support in $U\cap\Ee$.
Then
\begin{equation}
\label{e: Fourier with H}
\tilde f(\mu,k)=
\int_{U/K} f(u)\, e^{\mu(H(u^{-1}k))} \,du,
\end{equation}
for all $k\in K/M$,
and the Fourier transform $\mu\mapsto \tilde f(\mu)$
extends to a holomorphic $\cH$-valued function on
$\fa^*_\C$, also denoted by $\tilde f$,
satisfying the same equation {\rm (\ref{e: Fourier with H})}.
Moreover,
\begin{equation}
\label{e: pi(f) inversion}
\pi_\mu(f)e_\mu=\int_{K/M} \tilde f(-\mu-2\rho,k)
\pi_\mu(k)v_\mu\, dk
\end{equation}
for all $\mu\in\Lambda^+({U/K})$.
\end{lemma}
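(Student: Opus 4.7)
The plan is to treat each of the three claims in turn.

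Formula (\ref{e: Fourier with H}) will follow by direct substitution of (\ref{e: Iwasawa expression}) into the definition (\ref{e: Fourier transform}): the substitution is legitimate because $\supp f \subset U \cap \Ee$ and $\Eea$ is right-$K$-invariant, whence $u^{-1}k \in \Eea$ for every $u \in \supp f$ and every $k \in K$. For the holomorphic extension, the integrand $(u,\mu,k) \mapsto f(u) e^{\mu(H(u^{-1}k))}$ is entire in $\mu$ for fixed $(u,k)$ and uniformly bounded on compact subsets of $\fa^*_\C \times K$ (using continuity of $H$ on the compact set $(\supp f)^{-1}\cdot K \subset \Eea$). Differentiation under the integral sign, together with the equivalence of weak and strong holomorphy for Hilbert-valued maps, then yields the required holomorphic $\cH$-valued extension of $\tilde f$ to $\fa^*_\C$, still represented by (\ref{e: Fourier with H}).

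For (\ref{e: pi(f) inversion}) the starting point is the identity $e_\mu = \int_{K/M} \pi_\mu(k) v_\mu \, dk$, which I would verify by observing that the right-hand side is $K$-fixed (since $\pi_\mu(m) v_\mu = v_\mu$ by Lemma \ref{l: K-span}) and has inner product $1$ with $e_\mu$ (since $\langle v_\mu, e_\mu\rangle = 1$). Applying $\pi_\mu(u)$, multiplying by $f(u)$, and invoking Fubini, the left-hand side of (\ref{e: pi(f) inversion}) becomes $\int_{U/K} \int_{K/M} f(u)\, \pi_\mu(uk) v_\mu \, dk\, du$. Expanding the right-hand side of (\ref{e: pi(f) inversion}) via (\ref{e: Fourier with H}) and swapping the integrations, the problem reduces to the pointwise vector identity
\[ \pi_\mu(u) e_\mu = \int_{K/M} e^{-(\mu+2\rho)(H(u^{-1}k))}\, \pi_\mu(k) v_\mu\, dk, \qquad u \in U \cap \Ee. \]

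This last identity is the heart of the lemma and will be the main obstacle. The plan is to prove it by analytic continuation from the non-compact real form $G$. Both sides are $V_\mu$-valued holomorphic functions of $u$ on $\Ee$ (the left-hand side because $\pi_\mu$ is holomorphic on $G_\C$, the right-hand side because $H$ is holomorphic on $\Eea$), and $\Ee \cap G$ is a non-empty open subset of $G$, hence a totally real submanifold of maximal dimension in $\Ee$; the identity theorem will therefore reduce verification to the case $u = g \in \Ee \cap G$. For such $g$, the real Iwasawa decomposition $gk = \kappa(gk)\exp(H(gk))n(gk)$ with $\kappa(gk) \in K_0$ gives $\pi_\mu(gk) v_\mu = e^{\mu(H(gk))} \pi_\mu(\kappa(gk)) v_\mu$; the substitution $k_1 = \kappa(gk)$ on $K$ has Jacobian $dk = e^{2\rho(H(gk))} dk_1$, and inverting the Iwasawa factorization of $gk$ yields the companion identity $H(gk) = -H(g^{-1}k_1)$. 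Combining these three classical ingredients will transform $\int_{K/M} \pi_\mu(gk) v_\mu \, dk = \pi_\mu(g) e_\mu$ into the right-hand side of the key identity at $u = g$, reproducing the Harish-Chandra change of variables underlying the integral formula for the spherical function.
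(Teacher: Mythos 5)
Your proposal follows essentially the same route as the paper's proof: obtain (\ref{e: Fourier with H}) by inserting (\ref{e: Iwasawa expression}) into the definition, analytically continue in $\mu$ by holomorphy of the integrand, and reduce (\ref{e: pi(f) inversion}) to the pointwise identity
\[
\pi_\mu(u)e_\mu=\int_{K/M} e^{-(\mu+2\rho)H(u^{-1}k)}\,\pi_\mu(k)v_\mu\,dk,\qquad u\in U\cap\Ee,
\]
which is established first for $u\in G$ and then carried over by holomorphy in the $u$-variable. Where the paper simply cites \cite{GGA}, p.~197, Lemma~5.19 (together with $K/M=K_0/(M\cap K_0)$) for the $u\in G$ case, you instead re-derive that lemma via the Harish-Chandra change of variable $k\mapsto\kappa(gk)$ with Jacobian $e^{-2\rho H(g^{-1}k_1)}$ and the companion identity $H(gk)=-H(g^{-1}\kappa(gk))$; this is a perfectly valid unwinding of the cited result and your computations there are correct (including the normalization $\langle e_\mu,\int_{K/M}\pi_\mu(k)v_\mu\,dk\rangle=1$, which uses unitarity of $\pi_\mu|_K$ and $\langle v_\mu,e_\mu\rangle=1$).

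There is, however, one imprecision in the analytic-continuation step. You argue that $\Ee\cap G$ is a totally real submanifold of maximal dimension, so the identity theorem transfers the identity from $G$ to $\Ee$. But $\Ee$ need not be connected when $K$ is disconnected: the construction $\Eea=K_\C\exp(T_{\fa_\C})\exp(T_{\fn_\C})$ with $K_\C=K_{0\C}K$ yields one component of $\Ee$ per coset in $K_\C/K_{0\C}$, and since $G=NAK_0$ is connected, $\Ee\cap G$ lies entirely inside the identity component $\Ee_0$. The identity theorem therefore only gives the result on $\Ee_0$. The missing step (which the paper supplies) is to pass from $\Ee_0$ to $\Ee=\Ee_0 K_\C$: for $u=u_0k_1$ with $u_0$ in the identity component and $k_1\in K$, conjugation by $k_1$ together with $K$-invariance of $e_\mu$ and of the integration over $K/M$ reduces the claim for $u$ to the claim for $k_1^{-1}u_0k_1\in\Ee_0$. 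Adding this observation closes the gap and makes your proof complete.
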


The measure on $K/M$ used in (\ref{e: pi(f) inversion})
is the quotient of the normalized Haar measures on $K$ and $M$.
\smallskip

\begin{proof} The expression (\ref{e: Fourier with H}) follows
immediately from (\ref{e: Fourier transform})
and (\ref{e: Iwasawa expression}). The integrand in
(\ref{e: Fourier with H}) depends holomorphically on $\mu$,
locally uniformly with respect to $u$ and $k$.
Hence an analytic continuation is
defined by this formula.

In order to establish the identity (\ref{e: pi(f) inversion})
it suffices to show that
$$\pi_\mu(u)e_\mu=\int_{K/M} e^{-(\mu+2\rho)H(u^{-1}k)}
\pi_\mu(k)v_\mu \,dk$$
for $u\in U\cap\Ee$. The latter identity is easily shown
to hold for $u\in G$ (use \cite{GGA}, p. 197, Lemma 5.19,
and the fact that $K/M=K_0/(M\cap K_0)$).
By analytic continuation it then holds for $u\in \Ee_0$,
the identity component of $\Ee$. Since $\Ee=\Ee_0K_\C$,
it follows for all $u\in\Ee$.
\qed
\end{proof}

\begin{Cor}
\label{c: Sherman}
{\rm (Sherman)}
Assume $f\in L^2(U/K)$ has
support
contained in $U\cap\Ee$.
Then the sum
$$
\sum_{\mu\in\Lambda^+(U/K)} d(\mu)
\int_{K/M} \tilde f(-\mu-2\rho,k)  \,
\langle\pi_\mu(k)v_\mu,\pi_\mu(x)e_\mu \rangle \,dk,\quad x\in U/K,
$$
converges to $f$ in $L^2(U/K)$, and it
converges uniformly if $f$ has a sufficient number of continuous
derivatives.
\end{Cor}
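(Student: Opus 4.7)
The plan is to start from the representation-theoretic Fourier series (\ref{Fourier series}) already stated in the text, and substitute the identity (\ref{e: pi(f) inversion}) from Lemma \ref{l: holo ext} into each summand. Since $f\in L^2(U/K)$, the Peter--Weyl theorem applied to the right-$K$-invariant function $f$ reduces to a sum over spherical representations only: within each isotypic component of $L^2(U)$ the only non-vanishing contributions come from basis vectors in the line $\C e_\pi$, which forces the series (\ref{Fourier series}) to converge to $f$ in $L^2(U/K)$, and uniformly when $f$ has sufficiently many continuous derivatives (as cited from \cite{taylor}).

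Next, because $\supp f\subset U\cap\Ee$, Lemma \ref{l: holo ext} applies and gives
$$
\pi_\mu(f)e_\mu=\int_{K/M} \tilde f(-\mu-2\rho,k)\,\pi_\mu(k)v_\mu\,dk.
$$
Substituting this vector-valued identity into (\ref{Fourier series}) yields, for each $\mu\in\Lambda^+(U/K)$, a term of the form
$$
d(\mu)\,\Big\langle \int_{K/M} \tilde f(-\mu-2\rho,k)\,\pi_\mu(k)v_\mu\,dk,\; \pi_\mu(x)e_\mu\Big\rangle.
$$
Since $V_\mu$ is finite dimensional, the inner product is continuous and linear in its first argument. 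The scalar factor $\tilde f(-\mu-2\rho,k)$ therefore pulls out of the pairing, and the inner product may be interchanged with the integral over $K/M$. This converts the term into
$$
d(\mu)\int_{K/M} \tilde f(-\mu-2\rho,k)\,\langle \pi_\mu(k)v_\mu,\,\pi_\mu(x)e_\mu\rangle\,dk,
$$
which is exactly the $\mu$-th term of the series in the statement of the corollary.

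The convergence assertions are inherited verbatim from the already-known convergence of (\ref{Fourier series}), since the manipulation above is a term-by-term rewriting that does not alter the sum of the series. There is no substantive analytic obstacle: the one point that needs to be checked carefully is the convention for the inner product (conjugate-linear in the second argument, so that $\langle \lambda v,w\rangle=\lambda\langle v,w\rangle$ and the scalar $\tilde f(-\mu-2\rho,k)$ factors out correctly), together with the observation that the hypothesis $\supp f\subset U\cap\Ee$ is precisely what makes Lemma \ref{l: holo ext} available to supply the key identity (\ref{e: pi(f) inversion}).
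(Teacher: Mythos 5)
Your proof is correct and follows essentially the same approach as the paper, whose entire argument is that the corollary "follows immediately from (\ref{Fourier series}) by insertion of (\ref{e: pi(f) inversion})." You have simply spelled out the substitution, the scalar pull-out, and the inner-product convention more explicitly, all of which are sound.
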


\begin{proof}
(See \cite{ShermanBull}).
Follows immediately
from (\ref{Fourier series}) by insertion of
(\ref{e: pi(f) inversion}).\qed
\end{proof}

In \cite{ShermanActa} the inversion formula of
Corollary \ref{c: Sherman} is
extended to a formula for
functions on $U/K$ without
restriction on the support
(for symmetric spaces of rank one).
We shall not use this extension here.
For the special case of the sphere $U/K=S^n$,
see also \cite{ShermanTAMS}, \cite{Strichartz}
and \cite{Y}.

\eqsection{The spherical principal series}\noindent
The space $\cH=L^2(K/M)=L^2(K_0/(M\cap K_0))$
is the representation space for
the spherical principal series for $G$.
We denote by $\sigma_\lambda$ this series of
representations, given by
\begin{equation}
\label{e: spherical principal series}
[\sigma_\lambda(g)\psi](k)=e^{-(\lambda+\rho)H(g^{-1}k)}
 \psi(\kappa(g^{-1}k))
\end{equation}
for $\lambda\in\fa_\C^*$, $g\in G$, $\psi\in\cH$ and $k\in K_0$.
Here $\kappa\colon G\to K_0$ is the Iwasawa projection
$kan\mapsto k$.

Let $\mu\in\gL^+(U/K)$.
By extending $\pi_\mu$ to a holomorphic
representation of $G_\C$ and then restricting to $G$, we obtain
a finite dimensional
representation of $G$, which we again denote by~$\pi_\mu$.
We now have the following well-known result.
It relates the embedding of $V_\mu$ into $\cH$,
which  motivated (\ref{e: Fourier transform}),
to the principal series representations.

\begin{lemma}
\label{l: embedding}
Let $\mu\in\gL^+(U/K)$.  The map $v\mapsto h_v$ defined by
{\rm (\ref{eq-hv})} provides a
$G$-equivariant embedding of the contragredient of
$\pi_\mu$ into $\sigma_{-\mu-\rho}$.
\end{lemma}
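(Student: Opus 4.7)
The plan is to factor the anti-linear map $v \mapsto h_v$ through its natural linear companion on the dual, and then verify the intertwining by a direct Iwasawa computation. Identify the contragredient space $V_\mu^*$ with the anti-linear dual of $V_\mu$ via $v \mapsto \phi_v := \langle\,\cdot\,,v\rangle$, and introduce the linear map
$$
\Psi\colon V_\mu^* \to \cH,\qquad \Psi(\phi)(k) := \phi(\pi_\mu(k) v_\mu),
$$
so that $h_v = \Psi(\phi_v)$. It then suffices to show that $\Psi$ is a $G$-equivariant embedding of $(\pi_\mu^*,V_\mu^*)$ into $(\sigma_{-\mu-\rho},\cH)$.

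Two of the three things to check are immediate from Lemma \ref{l: K-span}. First, $\Psi(\phi)$ is right-$M$-invariant because $\pi_\mu(m) v_\mu = v_\mu$ for all $m\in M$, hence $\Psi(\phi)\in L^2(K/M) = \cH$ (in fact it is smooth). Second, injectivity of $\Psi$ follows from the spanning statement of the same lemma: if $\Psi(\phi)=0$ then $\phi$ annihilates the spanning set $\pi_\mu(K_0) v_\mu$, so $\phi=0$.

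The intertwining is the only genuine calculation. Fix $g\in G$ and $k\in K_0$, and apply the Iwasawa decomposition $g^{-1}k = \kappa(g^{-1}k)\exp(H(g^{-1}k))\,n$ with $n\in N$. The vector $v_\mu$ is a highest weight vector relative to a positive system whose restriction to $\fa$ is $\Sigma^+$, so it is annihilated by $\fn_\C$ and is a weight vector of weight $\mu$ for $\fa_\C$; consequently $\pi_\mu(n)v_\mu = v_\mu$ and $\pi_\mu(\exp Y)v_\mu = e^{\mu(Y)}v_\mu$ for $Y\in\fa_\C$. Hence
$$
\pi_\mu(g^{-1}k)\,v_\mu = e^{\mu(H(g^{-1}k))}\,\pi_\mu(\kappa(g^{-1}k))\,v_\mu,
$$
and applying $\phi\in V_\mu^*$ yields
$$
\Psi(\pi_\mu^*(g)\phi)(k) = \phi(\pi_\mu(g^{-1}k)v_\mu) = e^{\mu(H(g^{-1}k))}\,\Psi(\phi)(\kappa(g^{-1}k)),
$$
which equals $[\sigma_{-\mu-\rho}(g)\Psi(\phi)](k)$ in view of (\ref{e: spherical principal series}) and the identity $-((-\mu-\rho)+\rho)=\mu$.

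The main obstacle is essentially bookkeeping. Since $v\mapsto h_v$ is anti-linear, one must identify the target of the contragredient action via the Hermitian pairing rather than the usual dual pairing, and the highest-weight convention must be aligned so that $\fn_\C$ (rather than its opposite) annihilates $v_\mu$. With these conventions pinned down, the whole lemma collapses to the displayed Iwasawa identity for $\pi_\mu(g^{-1}k)v_\mu$.
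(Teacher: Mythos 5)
Your proof is correct and follows essentially the same route as the paper, which realizes the contragredient on the conjugate Hilbert space $\bar V_\mu$ via $\pi_\mu(g^{-1})^*$ and cites the same Iwasawa computation (the paper simply states that the intertwining identity "follows easily" from the highest-weight property and (\ref{e: spherical principal series}), which is exactly the computation you spell out). Your extra passage through the anti-linear dual $V_\mu^*$ and the explicit checks of $M$-invariance and injectivity are just a more detailed rendering of the same argument.
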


\begin{proof}
Recall that the contragredient representation
can be realized on the conjugate Hilbert space $\bar V_\mu$
by the operators $\pi_\mu(g^{-1})^*$, and
notice that $v\mapsto h_v$  is {\it linear} from $\bar V_\mu$
to $\cH$.
Since $v_\mu$ is
a highest weight vector it follows easily from
(\ref{e: spherical principal series})  that
$$\sigma_{-\mu-\rho}(g)h_v=h_{\pi_\mu(g^{-1})^*v}$$
for $g\in G$.
\qed
\end{proof}

The space $C^\infty(K/M)\subset \cH$ carries
the family of representations, also denoted by $\sigma_\lambda$,
of $\fg_\C$ obtained by differentiation and complexification.
Thus, although the representations $\sigma_\lambda$ of $G$
in general do not complexify to global representations of $U$,
the infinitesimal representations $\sigma_\lambda$ of $\fu$
are defined for all $\lambda\in\fa^*_\C$. We denote by
$\cH^\infty_\lambda$ the space
$C^\infty(K/M)$ equipped with the representation $\sigma_\gl$
of $\fu_\C=\fg_\C$, and with the left regular representation of
$K$.

\begin{lemma}
\label{l: u-homo}
The Fourier transform $f\mapsto \tilde f(\mu)$ defines a
$(\fu,K)$-homomorphism
{}from $C^\infty(U/K)$ to $\cH^\infty_{-\mu-\rho}$,
for all $\mu\in\gL^+(U/K)$. Moreover, the holomorphic extension,
defined in Lemma \ref{l: holo ext},
restricts to a $(\fu,K)$-homomorphism
from
$$\{f\in C^\infty(U/K)\mid \supp f\subset U\cap \Ee\}$$
to $\cH^\infty_{-\mu-\rho}$ for all $\mu\in\fa^*_\C$.
\end{lemma}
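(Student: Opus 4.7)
The plan is to verify $K$-equivariance and $\fu$-equivariance of the Fourier transform $f\mapsto\tilde f(\mu)$ separately; the two statements of the lemma then follow from slightly different incarnations of the same circle of ideas. Writing $L$ for the left regular action, the $K$-part is immediate: changing variables $u\mapsto k_0 u$ in (\ref{e: Fourier transform}) gives $\widetilde{L(k_0)f}(\mu,k)=\tilde f(\mu,k_0^{-1}k)$ for $k_0\in K$, and from (\ref{e: spherical principal series}) one reads off (using $H(k_0^{-1}k)=0$ and $\kappa(k_0^{-1}k)=k_0^{-1}k$ for $k_0,k\in K_0$) that $\sigma_\lambda|_{K_0}$ coincides with the left regular action of $K_0$ on $L^2(K/M)$. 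Since this extends to the full left regular representation of $K=K_0 M$ on $L^2(K/M)$, the computation yields the required $K$-intertwining; the same argument applies to the holomorphic extension via (\ref{e: Fourier with H}).

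For $\fu$-equivariance in the first statement, fix $\mu\in\gL^+(U/K)$ and rewrite $\tilde f(\mu)=h_{\pi_\mu(f)e_\mu}$ using (\ref{eq-hv}) and (\ref{e: Fourier transform}). A routine change-of-variables calculation in the integral defining $\pi_\mu(f)e_\mu$ yields $\pi_\mu(L(X)f)e_\mu=d\pi_\mu(X)\pi_\mu(f)e_\mu$ for $X\in\fu$. Differentiating the $G$-equivariance of Lemma \ref{l: embedding} gives $d\sigma_{-\mu-\rho}(X)h_v=h_{d\bar\pi_\mu(X)v}$ with $d\bar\pi_\mu(X)=-d\pi_\mu(X)^*$. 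The decisive observation is that $\pi_\mu$ is unitary on the compact group $U$, so $d\pi_\mu(X)$ is skew-adjoint for $X\in\fu$, forcing $d\bar\pi_\mu(X)=d\pi_\mu(X)$. Combining gives $d\sigma_{-\mu-\rho}(X)\tilde f(\mu)=h_{d\pi_\mu(X)\pi_\mu(f)e_\mu}=\widetilde{L(X)f}(\mu)$, and extending $\C$-linearly handles all of $\fu_\C$.

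For the second statement, fix $\mu\in\fa^*_\C$ and $f\in C^\infty(U/K)$ with $\supp f\subset U\cap\Ee$, and argue directly from (\ref{e: Fourier with H}). Differentiating under the integral with change of variable $u\mapsto\exp(tX)u$ produces $\widetilde{L(X)f}(\mu,k)=\int f(u)\,\frac{d}{dt}|_{t=0}\,e^{\mu H(u^{-1}\exp(-tX)k)}\,du$. On the other hand, unfolding (\ref{e: spherical principal series}) for $\sigma_{-\mu-\rho}$ rewrites $[d\sigma_{-\mu-\rho}(X)\tilde f(\mu)](k)$ as the analogous integral with integrand $\frac{d}{dt}|_{0}\bigl[e^{\mu H(\exp(-tX)k)}\,e^{\mu H(u^{-1}\kappa(\exp(-tX)k))}\bigr]$. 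The two integrands coincide already before differentiating, by the Iwasawa cocycle identity $H(gh)=H(g\kappa(h))+H(h)$ applied with $g=u^{-1}$ and $h=\exp(-tX)k$, which in turn follows from the uniqueness of the Iwasawa decomposition within the parametrization used to construct the holomorphic $H$.

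The one place that genuinely requires care---and the main obstacle in this approach---is justifying the cocycle identity in the complexified setting of Lemma \ref{l: complex Iwasawa map}: one must check that each of $u^{-1}$, $\exp(-tX)k$, $u^{-1}\exp(-tX)k$, and $u^{-1}\kappa(\exp(-tX)k)$ lies in $\Eea$ for sufficiently small $t$, and moreover that the sum $H(u^{-1}\kappa(\exp(-tX)k))+H(\exp(-tX)k)$ lies in the $\fa_\C$-domain of the holomorphic parametrization so that Iwasawa uniqueness can be invoked. The first membership is given by the support hypothesis, the second follows because $\Eea$ is an open right-$K$-invariant neighborhood containing $K_\C$, and continuity handles the remaining two products together with the domain condition for $t$ near $0$; but this is the step that uses the complex-geometric input of Section \ref{s: notation} in an essential way.
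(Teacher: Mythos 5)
Your treatment of the first statement is essentially the paper's own: the identity $\tilde f(\mu)=h_{\pi_\mu(f)e_\mu}$, the $G$-equivariance of $v\mapsto h_v$ from Lemma \ref{l: embedding}, and the skew-adjointness of $d\pi_\mu(X)$ for $X\in\fu$ combine exactly as you describe. The $K$-equivariance is likewise routine and agrees with the paper.

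For the second statement the idea is the same as the paper's (differentiate under the integral, invoke the Iwasawa cocycle identity, use the holomorphic Iwasawa projection of Lemma \ref{l: complex Iwasawa map}), but your argument glosses over the step where the paper is most careful, and in a way that leaves a genuine gap. You write $[d\sigma_{-\mu-\rho}(X)\tilde f(\mu)](k)$ as a derivative of $e^{\mu H(\exp(-tX)k)}\,\tilde f(\mu)(\kappa(\exp(-tX)k))$ with the same $X\in\fq$ that appears in $L(X)f$. But $\sigma_{-\mu-\rho}$ is only defined on $G$, and $\exp(-tX)\notin G$ for $X\in\fq$, $t\neq0$; the infinitesimal operator $\sigma_{-\mu-\rho}(X)$ on $C^\infty(K/M)$ for $X\in\fq$ is \emph{defined} as $i\,\sigma_{-\mu-\rho}(Y)$ where $Y=-iX\in\fp\subset\fg$, and $\sigma_{-\mu-\rho}(Y)$ is the derivative of $t\mapsto\sigma_{-\mu-\rho}(\exp tY)$ with the genuine real Iwasawa $\kappa\colon G\to K_0$. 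Your ``unfolding'' for $X\in\fq$ therefore tacitly uses a holomorphic $\kappa_\C$ with values in $K_\C$, and more importantly treats a derived identity as a definition. What must actually be checked is that $i\,\sigma_{-\mu-\rho}(Y)$, computed with the real $\kappa$ and $Y\in\fp$, equals the right-hand side you want; this requires (i) applying the cocycle identity to $H(u^{-1}\exp(-tY)k)$ with $\exp(-tY)k\in G$ and $\kappa(\exp(-tY)k)\in K_0$, so that $u^{-1}\kappa(\exp(-tY)k)$ lies in $\Eea$ by the right-$K$-invariance guaranteed by Lemma \ref{l: complex Iwasawa map} (this is the precise reason $\Eea$ is built right-$K$-invariant rather than right-$K_\C$-invariant), and (ii) converting the $Y$-derivative into an $X$-derivative by holomorphicity of $H$ alone. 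Your last paragraph locates the ``complex-geometric input'' in $\Eea$-membership questions, which are indeed present but secondary; the essential use of the holomorphic structure in the paper is exactly the conversion (ii), $\frac{d}{dt}\big|_0 e^{\mu H(u^{-1}\exp(-tX)k)}=i\,\frac{d}{dt}\big|_0 e^{\mu H(u^{-1}\exp(-tY)k)}$, which your write-up does not make explicit. Once you introduce $Y=-iX$ and keep the cocycle identity in $G$, your domain worries about $\kappa_\C$ disappear and the argument coincides with the paper's. Also note that a naive complexified unfolding would require $\tilde f(\mu)\in C^\infty(K/M)$ to extend holomorphically to a neighborhood of $K/M$ in $K_\C/M_\C$, which fails for general (non-$K$-finite) smooth $f$; the paper's route avoids ever evaluating $\tilde f(\mu)$ outside $K/M$.
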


\begin{proof} Since $\pi_\mu$ is a unitary representation of $U$
it follows from  Lemma \ref{l: embedding} that
$\sigma_{-\mu-\rho}(X)h_v=h_{\pi_\mu(X)v}$ for $X\in\fu$, $v\in V_\mu$.
The first statement now follows, since $$\tilde f(\mu)=\int_{U/K}
f(u) h_{\pi_\mu(u)e_\mu}\,du.$$

It follows from Lemma \ref{l: uniqueness} and Theorem \ref{t: into}
below, that the second statement can be derived from the first by analytic
continuation with respect to $\mu$, provided the support of $f$
is sufficiently small. However, we prefer to give an independent
proof, which only requires assumptions on the support of $f$
as stated in the lemma.

Since the Fourier transform in (\ref{e: Fourier with H}) is clearly
$K$-equivariant,
it suffices to prove the intertwining property
\begin{equation}\label{eq-intertwiningProperty}
[L(X)f]^\sim(\mu)=\sigma_{-\mu-\rho}(X)\tilde f(\mu)
\end{equation}
for $X\in\fq$.  By definition
$$[L(X)f](u)=\frac{d}{dt}\Big|_{t=0} f(\exp(-tX)u)$$
and hence by invariance of the measure
$$[L(X)f]^\sim(\mu,k)=
\int_{U/K} f(u) \,\frac{d}{dt}\Big|_{t=0}\,
e^{\mu(H(u^{-1}\exp(-tX)k))}\,du.$$

Let $\fp=i\fq$ so that $\fg=\fk+\fp$ is the Cartan
decomposition of $\fg$, and write $X=iY$ for $Y\in\fp$.
Since the complexified Iwasawa map $H$ is holomorphic, it
follows that
$$\frac{d}{dt}\Big|_{t=0}\,
e^{\mu(H(u^{-1}\exp(-tX)k))}=i\frac{d}{dt}\Big|_{t=0}\,
e^{\mu(H(u^{-1}\exp(-tY)k))}.$$
Furthermore
$$H(u^{-1}\exp(-tY)k)=H(u^{-1}\kappa(\exp(-tY)k))+H(\exp(-tY)k)
$$
and hence we derive
\begin{eqnarray}
&&[L(X)f]^\sim(\mu,k)\nonumber\\
&&\qquad=
i\frac{d}{dt}\Big|_{t=0} \left[ e^{\mu(H(\exp(-tY)k))}
\int_{U/K} f(u)
e^{\mu(H(u^{-1}\kappa(\exp(-tY)k))}\,du\right]
\nonumber\\
&&\qquad=
i\frac{d}{dt}\Big|_{t=0} \left[e^{\mu(H(\exp(-tY)k))}
\tilde f(\mu,\kappa(\exp(-tY)k))\right].\nonumber\\
&&\qquad=
i\frac{d}{dt}\Big|_{t=0} \left[\sigma_{-\mu-\rho}(\exp(tY))
\tilde f(\mu)\right](k).\nonumber
\end{eqnarray}
Since by definition $\sigma_{-\mu-\rho}(X)=i\sigma_{-\mu-\rho}(Y)$,
the last expression is exactly
$\sigma_{-\mu-\rho}(X)\tilde f(\mu)$ evaluated at $k$.
\qed\end{proof}

We recall that there exist normalized standard intertwining
operators between the principal series:
$$\cA(w,\lambda)\colon \cH\to\cH, \quad w\in W\, ,$$
such that
\begin{equation}\label{e: intertwining property of A}
\sigma_{w\lambda}(g)\circ\cA(w,\lambda)=
\cA(w,\lambda)\circ\sigma_\lambda(g)
\end{equation}
for all $g\in G$.
The normalization is such that
\begin{equation}\label{e: normalization of A}
\cA(w,\lambda)1=1
\end{equation}
for the constant function $1$ on $K/M$. The map
$\lambda\mapsto \cA(w,\lambda)$ is meromorphic with values
in the space of bounded linear operators on $\cH$.

We need the following property of the {\it Poisson kernel},
which is defined for $x\in G$ and $k\in K_0$ by
$e^{-(\lambda+\rho)H(x^{-1}k)}$. By Lemma
\ref{e: complex H} it is defined also for $x\in\Ee$ and $k\in K$.

\begin{lemma}
\label{l: A on Poisson}
The identity
\begin{equation}
\label{e: A on e}
\cA(w,\lambda)e^{-(\lambda+\rho)H(x^{-1}\cdot)}
=e^{-(w\lambda+\rho)H(x^{-1}\cdot)},
\end{equation}
of functions in $\cH$, holds for all $x\in\Ee$.
\end{lemma}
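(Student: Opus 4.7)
My plan is to establish (\ref{e: A on e}) first for $x\in G$ directly from the intertwining property, and then to extend to $x\in\Ee$ by holomorphic continuation. For the first step, I would observe that for $x\in G$ and $k\in K_0$, formula (\ref{e: spherical principal series}) applied to the constant function $1\in\cH$ on $K/M$ gives $[\sigma_\lambda(x)1](k) = e^{-(\lambda+\rho)H(x^{-1}k)}$. Hence the Poisson kernel, viewed as a function of $k$, coincides with the vector $\sigma_\lambda(x)1\in\cH$. Applying $\cA(w,\lambda)$ and using the intertwining relation (\ref{e: intertwining property of A}) together with the normalization (\ref{e: normalization of A}), the left-hand side of (\ref{e: A on e}) becomes
\begin{equation*}
\cA(w,\lambda)\sigma_\lambda(x)1 \;=\; \sigma_{w\lambda}(x)\cA(w,\lambda)1 \;=\; \sigma_{w\lambda}(x)1,
\end{equation*}
whose value at $k$ is $e^{-(w\lambda+\rho)H(x^{-1}k)}$, the right-hand side of (\ref{e: A on e}). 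This settles the identity for every $x\in G$.

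For the extension to $\Ee$, I would view both sides of (\ref{e: A on e}) as $\cH$-valued functions of $x$. The right-hand side is holomorphic in $x\in\Ee$ by Lemma \ref{l: complex Iwasawa map}. For the left-hand side, the map $x\mapsto e^{-(\lambda+\rho)H(x^{-1}\cdot)}$ is a holomorphic map from $\Ee$ into $\cH$ (pointwise holomorphy in $x$, together with uniform boundedness in $k$ over the compact set $K/M$, upgrades to strong $\cH$-valued holomorphy), and this is composed with the bounded operator $\cA(w,\lambda)$, which depends meromorphically on $\lambda$. Since the two holomorphic $\cH$-valued functions of $x$ agree on the non-empty open totally real submanifold $\Ee\cap G$, they agree throughout the connected component of $\Ee$ containing $\Ee\cap G$ by the identity theorem for vector-valued holomorphic functions; the remaining components, if any, are reached by the left-$K$-invariance of $\Ee$ and the fact that $K\subset G$, using that both sides of (\ref{e: A on e}) transform compatibly under left translation of $x$ by $K$. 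Poles of $\cA(w,\lambda)$ in $\lambda$ are handled by clearing with a scalar factor and passing to the limit.

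The hard part will be the vector-valued holomorphy of the Poisson kernel as a map $\Ee\to\cH$, for which Lemma \ref{l: complex Iwasawa map} is essential; once this and the boundedness of $\cA(w,\lambda)$ are in hand, the remainder is a formal manipulation of the intertwining relation applied to the cyclic vector $1$, combined with a standard analytic continuation from the real form.
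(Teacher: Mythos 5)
Your argument for $x\in G$ is exactly the one the paper has in mind when it says the identity ``follows easily'': identifying the Poisson kernel with $\sigma_\lambda(x)1$ and computing $\cA(w,\lambda)\sigma_\lambda(x)1=\sigma_{w\lambda}(x)\cA(w,\lambda)1=\sigma_{w\lambda}(x)1$. Your $\cH$-valued holomorphy argument for extending to the identity component $\Ee_0$ of $\Ee$, using Lemma \ref{l: complex Iwasawa map} and the identity theorem, also matches the paper.

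The step from $\Ee_0$ to all of $\Ee$, however, has a real gap. You invoke ``the fact that $K\subset G$,'' but this is false in general: $K_0=G^\theta$ is maximal compact in $G$, so any compact subgroup of $G$ containing $K_0$ must equal $K_0$; hence $K\subset G$ forces $K=K_0$, which is not assumed. Moreover, even setting this aside, for $\ell\in K\setminus K_0$ your argument would need $\cA(w,\lambda)$ to commute with the left regular action $L(\ell)$ on $\cH$, and the intertwining relation (\ref{e: intertwining property of A}) only gives this for $g\in G$, hence for $K_0$ but not a priori for all of $K$. Finally, the components of $\Ee$ are of the form $\Ee_0 h$ with $h$ ranging over $K_\C/K_{0\C}$, i.e.\ reached by right multiplication on $x$, and it is not clear that left translation by $K$ covers them. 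The paper's device sidesteps all of this: since the complexified Iwasawa projection $H$ is left-$K_\C$-invariant, replacing $x$ by $xh^{-1}$ with $h\in K_\C$ leaves $H(x^{-1}k)=H(hx^{-1}k)$ unchanged, so both sides of (\ref{e: A on e}) are literally the same elements of $\cH$; combined with $\Ee=\Ee_0 K_\C$, the identity propagates from $\Ee_0$ to $\Ee$ with no appeal to equivariance of $\cA(w,\lambda)$ at all.
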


\begin{proof}
The identity is well-known for $x\in G$. In fact in this case
it follows easily from (\ref{e: spherical principal series}),
(\ref{e: intertwining property of A}) and (\ref{e: normalization of A}).
The map
$x\mapsto e^{\mu (H(x^{-1}\cdot))}$ is holomorphic
$\cH$-valued on $\Ee$ for each $\mu\in\fa^*_\C$,
because the complexified Iwasawa projection
is holomorphic. Hence
(\ref{e: A on e}) holds
for $x\in \Ee_0$ by analytic continuation,
and then for $x\in\Ee$ by the obvious
left-$K_\C$-invariance
of both sides with respect to $x^{-1}$.
\qed\end{proof}

\eqsection{The $K$-finite Paley-Wiener space}

\noindent
For each irreducible representation $\delta$  of $K_0$ we denote
by $\cH_\delta$ the finite dimensional subspace of $\cH$
consisting of the functions that generate
an isotypical representation of type $\delta$. Likewise, for
each finite set $F$ of $K_0$-types, we denote by $\cH_F$ the sum
of the spaces $\cH_\delta$ for $\delta\in F$.
Obviously, the intertwining operators $\cA(w,\lambda)$ preserve
each subspace $\cH_F$. Although we do not need it in the sequel,
we remark that
$\lambda\mapsto \cA(w,\lambda)|_{\cH_F}$ is a rational map
from $\fa_\C^*$ into the space of linear operators on the finite
dimensional space $\cH_F$, for each $F$, see \cite{W72}.

Note that since $K/K_0$ is finite, a function on $K/M=K_0/(K_0\cap M)$
is $K_0$-finite if and only if it is $K$-finite. We use the
notations $\cH_\delta$ and $\cH_F$ also for an irreducible
representation $\delta$ of $K$, and for a set $F$ of $K$-types.

\begin{defi}
\label{d: PW space}
For $r>0$ the {\it $K$-finite Paley-Wiener space}
$\PW_{K,r}(\fa)$ is
the space of holomorphic functions $\varphi$ on $\fa_\C^*$
with values in $\cH=L^2(K/M)$
satisfying the following.

\item{(a)} There exists a finite set $F$ of $K$-types such that
$\varphi(\gl)\in \cH_F$ for all $\gl\in\fa_\C^*$.

\item{(b)} For each $k\in\N$ there exists a constant $C_k>0$ such that
$$\|\varphi(\gl)\|\leq C_k(1+|\gl|)^{-k} e^{r|\Re\gl|}$$
for all $\gl\in\fa_\C^*.$

\item{(c)} The identity
$\varphi(w(\mu+\rho)-\rho)=\cA(w,-\mu-\rho)\varphi(\mu)$
holds for all $w\in W$, and for generic $\mu\in\fa_\C^*$.
\end{defi}

We note that the norm on $\fa^*_\C$ used in (b) is induced by
the negative of the Killing form on $\fa$.
In particular we see that $\PW_{K,r}(\fa)=\PW_{K_0,r}(\fa)$,
that is, the $K$-finite Paley-Wiener space is the same for all the
spaces $U/K$ where $K_0\subset K\subset U^\theta$.

Notice that the Paley-Wiener space $\PW_r(\fa)$ defined in \cite{OS}
can be identified with the space of functions $\varphi$
in $\PW_{K,r}(\fa)$, for which $\varphi(\lambda)$
is a constant function on $K/M$ for each $\lambda$.
This follows from the normalization
(\ref{e: normalization of A}).

The functions in the Paley-Wiener space are uniquely determined by their
restriction to $\gL^+(U/K)$, at least when $r$ is sufficiently small.
This is seen in the following lemma.

\begin{lemma}
\label{l: uniqueness}
There exists $R>0$ such that if
$\varphi\in\PW_{K,r}(\fa)$ for some $r<R$ and
$\varphi(\mu)=0$ for all $\mu\in\gL^+(U/K)$, then $ \varphi=0$.
\end{lemma}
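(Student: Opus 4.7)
The plan is to reduce the $\cH$-valued uniqueness question to the scalar Paley--Wiener uniqueness established in \cite{OS}, exploiting condition (a) of Definition \ref{d: PW space} to decompose $\varphi$ into finitely many scalar pieces. Given $\varphi\in\PW_{K,r}(\fa)$ vanishing on $\Lambda^+(U/K)$, condition (a) supplies a finite set $F$ of $K$-types such that $\varphi$ takes values in the finite-dimensional space $\cH_F$. Choosing an orthonormal basis $b_1,\dots,b_N$ of $\cH_F$, I would write
\[\varphi(\lambda)=\sum_{j=1}^{N} c_j(\lambda)\,b_j,\qquad c_j(\lambda)=\langle\varphi(\lambda),b_j\rangle,\]
where each $c_j$ is a scalar entire function on $\fa_\C^*$. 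Since norms on $\cH_F$ are equivalent, the decay estimate in (b) transfers verbatim to every $c_j$, so each carries exponential type at most $r$ with rapid polynomial decrease in the imaginary directions, and the hypothesis that $\varphi$ vanishes on $\Lambda^+(U/K)$ immediately gives $c_j(\mu)=0$ for all $j$ and all $\mu\in\Lambda^+(U/K)$.

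The next step is to invoke the scalar uniqueness theorem of \cite{OS}, which furnishes a positive radius $R>0$ such that any scalar entire function on $\fa_\C^*$ of exponential type $r<R$ satisfying the polynomial bound of (b), and vanishing on $\Lambda^+(U/K)$, must be identically zero. Applied to each $c_j$ this forces $c_j\equiv 0$ for every $j$, hence $\varphi\equiv 0$, which is the desired conclusion. Both constants $R$ here and in the target statement are the same, so no loss is incurred in this reduction.

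The main technical hurdle to anticipate is that the scalar uniqueness in \cite{OS} is proved for functions carrying the shifted $W$-invariance (inherited there from the $K$-invariant Fourier transform), whereas an individual coordinate $c_j$ need not enjoy any such symmetry. To bridge this gap I would first apply condition (c): the relation $\varphi(w(\mu+\rho)-\rho)=\cA(w,-\mu-\rho)\varphi(\mu)$ promotes the assumed vanishing on $\Lambda^+(U/K)$ to vanishing on the larger set $\{w(\mu+\rho)-\rho:w\in W,\ \mu\in\Lambda^+(U/K)\}$, once one verifies that the meromorphic singularities of $\lambda\mapsto\cA(w,\lambda)|_{\cH_F}$ (which is in fact rational, as noted at the start of the section) do not collide with the vanishing set $\Lambda^+(U/K)$. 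With this enlargement in hand, one can either $W$-symmetrize each $c_j$ to place it inside the scalar Paley--Wiener space treated in \cite{OS}, or, equivalently, apply the underlying multi-dimensional Carlson-type argument of \cite{OS} (which uses shifted $W$-invariance only to pass from a positive cone to the full weight lattice) directly to the individual $c_j$, bypassing the invariance issue altogether.
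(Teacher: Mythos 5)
Your core reduction is the same as the paper's: pass from the $\cH$-valued function $\varphi$ to scalar functions of $\lambda$ by pairing against vectors in $\cH$, then quote the scalar uniqueness result from \cite{OS}. The paper takes $\lambda\mapsto\langle\varphi(\lambda,\cdot),\psi\rangle$ for arbitrary $\psi\in\cH$, while you pair against an orthonormal basis of the finite-dimensional space $\cH_F$ supplied by condition (a); these are equivalent, and your use of (a) to get uniform constants across the finitely many coordinates is a clean way to organize the estimate.

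Where you diverge from the paper is in the attention you give to Weyl invariance. The paper's proof applies \cite{OS}, Section~7, \emph{directly} to the scalar functions $\langle\varphi(\lambda,\cdot),\psi\rangle$, without invoking condition (c) at all. Note that these scalars are in general \emph{not} shifted-$W$-invariant: condition (c) gives $\langle\varphi(w(\mu+\rho)-\rho),\psi\rangle=\langle\varphi(\mu),\cA(w,-\mu-\rho)^*\psi\rangle$, which is a pairing against a \emph{different} vector. So if Section~7 of \cite{OS} genuinely required $W$-invariance as a hypothesis, the paper's one-line reduction would not work either. The resolution is that the underlying argument in \cite{OS} Section~7 is a multi-dimensional Carlson-type theorem that needs only the exponential type and rapid-decay bounds together with vanishing on the half-lattice $\Lambda^+(U/K)$; the $W$-invariance condition on the Paley--Wiener space is needed for surjectivity, not for this uniqueness statement. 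Your second proposed fix (apply the Carlson argument directly to $c_j$) is therefore the right instinct, and in fact you do not even need the preliminary enlargement of the vanishing set via condition (c).

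One genuine gap worth flagging in your first proposed fix: $W$-symmetrizing $c_j$ to $\tilde c_j=\sum_{w\in W}c_j(w(\cdot+\rho)-\rho)$ does produce a shifted-$W$-invariant function vanishing on $\Lambda^+(U/K)$, and \cite{OS} would give $\tilde c_j\equiv 0$, but this does \emph{not} imply $c_j\equiv 0$ -- symmetrization is not injective. So that branch of your argument, as stated, does not close. Stick with the direct Carlson route.
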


\begin{proof}  The relevant value of $R$ is
the same as in \cite{OS} Thm. 4.2 (iii) and Remark 4.3.
The lemma follows easily from application of
\cite{OS}, Section 7, to the function
$\gl\mapsto \langle\varphi(\gl,\,\cdot\,), \psi\rangle$
for each $\psi\in\cH$.\qed
\end{proof}

Obviously $\PW_{K,r}(\fa)$ is $K$-invariant, where
$K$ acts by the left regular representation
on functions on $K/M$. The following lemma shows that it
is also a $(\fu,K)$-module.

\begin{lemma}
\label{l: (u,K)}
Let $r>0$, $\varphi\in\PW_{K,r}(\fa)$ and $X\in\fu_\C$. Then
the function $\psi=\sigma(X)\varphi$ defined by
$$\psi(\lambda)=\sigma_{-\lambda-\rho}(X)(\varphi(\lambda))\in\cH$$
for each $\lambda\in\fa^*_\C$, belongs to $\PW_{K,r}(\fa)$.
\end{lemma}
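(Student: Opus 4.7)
The plan is to verify the three defining conditions of $\PW_{K,r}(\fa)$ for $\psi$, after first noting holomorphy. The holomorphy is immediate once one observes that differentiating the explicit formula (\ref{e: spherical principal series}) at the identity realizes $\sigma_\lambda(X)$ as a first-order differential operator on $K/M$ whose coefficients are polynomial in $\lambda$ of degree at most one; composition with the holomorphic $\cH$-valued function $\varphi$ then produces a holomorphic function.

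For condition (a), I would invoke the $K$-equivariance of the principal series construction: for $k\in K$ and $X\in\fu_\C$,
$$L(k)\,\sigma_\lambda(X)\,L(k)^{-1}=\sigma_\lambda(\Ad(k)X).$$
Since $\fu_\C$ is itself a finite-dimensional $K$-module, this implies $\sigma_{-\lambda-\rho}(X)\cH_F\subset\cH_{F'}$, where $F'$ is the finite set of $K$-types appearing in the tensor product of $\fu_\C$ with $\cH_F$.

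For condition (b), the restriction of $\sigma_{-\lambda-\rho}(X)$ to the finite-dimensional $K$-invariant subspace $\cH_{F'}\subset \cH^\infty$ has operator norm bounded by a polynomial in $|\lambda|$, since its matrix entries are polynomial in $\lambda$. Thus
$$\|\psi(\lambda)\|\le p(|\lambda|)\,\|\varphi(\lambda)\|\le p(|\lambda|)\,C_k(1+|\lambda|)^{-k}e^{r|\Re\lambda|},$$
and enlarging the order $k$ in the Paley--Wiener estimate for $\varphi$ absorbs the polynomial factor.

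For condition (c), which I expect to be the main point, I would differentiate the intertwining identity (\ref{e: intertwining property of A}), valid for $g\in G$, at the identity to obtain the Lie-algebra version
$$\sigma_{w\lambda}(X)\,\cA(w,\lambda)=\cA(w,\lambda)\,\sigma_\lambda(X)$$
on $K$-finite vectors, for $X\in\fg$, and then extend by $\C$-linearity to $X\in\fg_\C=\fu_\C$. Substituting $\lambda=-\mu-\rho$, so that $w\lambda=-w(\mu+\rho)$, and applying condition (c) for $\varphi$, one computes, for generic $\mu$,
\begin{align*}
\psi(w(\mu+\rho)-\rho)
&=\sigma_{-w(\mu+\rho)}(X)\,\varphi(w(\mu+\rho)-\rho)\\
&=\sigma_{-w(\mu+\rho)}(X)\,\cA(w,-\mu-\rho)\,\varphi(\mu)\\
&=\cA(w,-\mu-\rho)\,\sigma_{-\mu-\rho}(X)\,\varphi(\mu)=\cA(w,-\mu-\rho)\,\psi(\mu).
\end{align*}
The essential obstacle is justifying the differentiation of the intertwining identity cleanly; this is legitimate because $\cA(w,\lambda)$ preserves each $K$-isotypic component, which is finite-dimensional, so differentiation in $g$ at the identity commutes with applying $\cA(w,\lambda)$. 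The meromorphic nature of $\cA(w,\lambda)$ in $\lambda$ causes no trouble because condition (c) is only required for generic $\mu\in\fa_\C^*$.
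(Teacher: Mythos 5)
Your proof is correct and takes essentially the same route as the paper: condition (a) via $K$-equivariance of $X\mapsto\sigma_\lambda(X)$ together with finite-dimensionality of $\fu_\C$; condition (b) by exploiting that $\sigma_{-\lambda-\rho}(X)$ acts between finite-dimensional isotypic spaces with entries polynomial of degree $\leq 1$ in $\lambda$ (the paper makes this explicit by a Leibniz computation splitting into a multiplication term involving $(\lambda+\rho)$ and a $\lambda$-independent derivative term); and condition (c) by differentiating the global intertwining relation. The one small slip is in (b): the relevant operator is the restriction of $\sigma_{-\lambda-\rho}(X)$ to $\cH_F$ (where $\varphi(\lambda)$ lives), with image in $\cH_{F'}$ — not its restriction to $\cH_{F'}$ — but this does not affect the argument.
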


\begin{proof}
Recall that $\sigma_{-\gl-\rho}(X)$ is defined
by complexification of the infinitesimal action of
$\fg$ on the smooth functions in $\cH$, and note that
$\varphi(\lambda)$ is smooth on $K/M$, since it is $K$-finite.
Hence we may assume $X\in\fg$.
It is easily seen that $\psi(\lambda)$
is $K_0$-finite, of types which occur in the tensor product of
the adjoint representation $\mathrm{Ad}$ of $K_0$ on $\fg$
with types from $F$. Hence condition (a) is valid for
the function $\psi$.
Condition (c) follows immediately from the intertwining
property of $\cA(w,\lambda)$. It remains to verify
holomorphicity in $\lambda$, and the estimate
in (b) for $\psi$.

By definition both the holomorphicity
and norm in the estimate (b)
refer to the Hilbert space $\cH=L^2(K/M)$. However,
because of condition (a)
and since $\cH_F$ is finite dimensional, it is equivalent
to require holomorphicity of $\psi(\gl)(x)$ pointwise for
each $x\in K/M$, and likewise to require the exponential
estimate for $\psi(\gl)(x)$ pointwise with respect to $x$.
Thus let an element $x=kM\in K/M$ be fixed, where $k\in K_0$.

Note that by (\ref{e: spherical principal series})
\begin{equation}%
(\sigma(X)\varphi)(\lambda)(k)=
\frac{d}{dt}\Big|_{t=0}\, e^{-(\lambda+\rho)(H(\exp(-tX)k))}\,
\varphi(\lambda)(\kappa(\exp(-tX)k)).
\nonumber
\end{equation}
Differentiating with the Leibniz rule, we obtain a sum of
two terms.

The first term is
\begin{equation}
\label{e: first term}
\frac{d}{dt}\Big|_{t=0} \left( e^{-(\lambda+\rho)(H(\exp(-tX)k))} \right)
\varphi(\lambda)(k).
\end{equation}
Let $\alpha(Z)=H(\exp(Z)k)\in i\fa$ for $Z\in\fg$, then $\alpha(0)=0$ and
it follows that (\ref{e: first term}) equals
$$
(\lambda+\rho)(d\alpha_0(X))\,\varphi(\lambda)(k)
$$
where $d\alpha_0$ is the differential of $\alpha$ at $0$.
It is now obvious that (\ref{e: first term})
is holomorphic and satisfies the same the growth
estimate as $\varphi(\lambda)(k)$.
Hence (b) is valid for the
first term.

The second term is
\begin{equation}
\label{e: second term}
\frac{d}{dt}\Big|_{t=0}
\varphi(\lambda)(\kappa(\exp(-tX)k)),
\end{equation}
which we rewrite as follows.
Let
$$\beta(Z)=\kappa(\exp(Z)k)k^{-1}\in K_0$$
for $Z\in\fg$, then $\beta(0)=e$ and
$$\varphi(\lambda)(\kappa(\exp(-tX)k))=\varphi(\gl)(\beta(-tX)k).$$
It follows that (\ref{e: second term}) equals
$$
L(d\beta_0(X))(\varphi(\lambda))(k)
$$
where $d\beta_0(X)\in T_eK_0=\fk$.
The linear operator $L(d\beta_0(X))$ preserves
the finite dimensional space $\cH_F$ and hence
restricts to a bounded linear operator on that space.
It follows that (\ref{e: second term})
is holomorphic in $\gl$ and satisfies~(b).
\qed\end{proof}

\eqsection{Fourier transform maps into Paley-Wiener space}
\noindent
In this section we prove the following result. Let $C_K^\infty(U/K)$
denote the space of $K$-finite smooth functions on $U/K$, and for
each $r>0$ let
$$C^\infty_{K,r}(U/K)=\{f\in C_K^\infty(U/K)\mid
\supp f \subset\Exp(\bar B_r(0))\}$$
where $\bar B_r(0)$ denotes the closed ball in $\fq$
of radius $r$ and center
$0$, and $\Exp$ denotes the exponential map of $U/K$.

\begin{thm}
\label{t: into}
There exists a number $R>0$ such that
$\Exp(\bar B_R(0))\subset U\cap\Ee$ and such that
the following holds
for every $r<R$:

If $f\in C^\infty_{K,r}(U/K)$,
then the holomorphic extension of $\tilde f$ from
Lemma \ref{l: holo ext} belongs to
$\PW_{K,r}(\fa)$.
\end{thm}

In the proof we shall reduce to the case where $K=K_0$.
The following lemma prepares the way for this reduction.

The projection $p: U/K_0\to U/K$ is a covering map. Hence we
can choose $R>0$ such that $p$ restricts to a
diffeomorphism of the open ball $\Exp(B_R(0))$ in $U/K_0$
onto the open ball $\Exp(B_R(0))$ in $U/K$.
It follows that for each $r<R$
a bijection $F\mapsto f$ of $C^\infty_{K_0,r}(U/K_0)$ onto
$C^\infty_{K,r}(U/K)$ is defined by
$$f(u)=\sum_{v\in K/K_0} F(uv),\quad u\in U$$
for $F\in C^\infty_{K_0,r}(U/K_0)$,
where for each $u$ at most one term is non-zero.
The inverse map is given by
$$F(u)= \begin{cases} f(p(u)), & u\in \Exp(B_R(0)),\\0,
&{\rm otherwise,}
\end{cases}$$
for $f\in C^\infty_{K,r}(U/K)$. Let $\Eea\subset G_\C$ be as in
Lemma \ref{l: complex Iwasawa map}, and note that this set
also satisfies the assumptions of that lemma for the
symmetric space $U/K_0$. As before, let $\Ee=\{x^{-1}|x\in\Eea\}$.

\begin{lemma}
\label{l: K to K0} Let $f\in C^\infty_{K,r}(U/K)$ and
$F\in C^\infty_{K_0,r}(U/K_0)$ be as above.
Then $f$ is supported in $U\cap\Ee$ if and only
$F$ is supported in $U\cap\Ee$. In this case, the analytically continued
Fourier transforms of these functions satisfy
$$\tilde f(\mu)=c\tilde F(\mu)$$
for all $\mu\in\fa^*_\C$, where $c$ is the index of $K_0$ in $K$.
\end{lemma}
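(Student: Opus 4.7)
\emph{Proof plan.} The strategy is to reduce the identity to an elementary change of variables on the compact group $U$, using the right $K$-invariance of the complexified Iwasawa kernel $u\mapsto e^{\mu(H(u^{-1}k))}$ together with the defining relation $f(u)=\sum_{v\in K/K_0}F(uv)$.

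\emph{Support equivalence.} The hypothesis $r<R$ together with $\exp(\bar B_R(0))\subset U\cap\Ee$ already forces both conditions to hold. The lifts of $\supp f$ and $\supp F$ to $U$ lie in $\exp(\bar B_r(0))\cdot K$ and $\exp(\bar B_r(0))\cdot K_0$, respectively; by the $K_\C\times K$-invariance of $\Eea$ from Lemma~\ref{l: complex Iwasawa map}, the set $\Ee=(\Eea)^{-1}$ is right $K_\C$-invariant and in particular right $K$-invariant, so both lifts lie in $\Ee$ automatically. Lemma~\ref{l: holo ext} therefore applies to both $f$ and $F$. Since $K=MK_0$ (Section~\ref{s: notation}), the second isomorphism theorem gives $K/M\simeq K_0/(M\cap K_0)$, so the Hilbert space $\cH=L^2(K/M)$ is common to $U/K$ and $U/K_0$; parametrising it by $k\in K_0$ allows a pointwise comparison of $\tilde f(\mu)$ and $\tilde F(\mu)$.

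\emph{Core identity.} Viewing $f$ and $F$ as right $K$- and right $K_0$-invariant functions on $U$ via pullback, Lemma~\ref{l: holo ext} reads
$$\tilde f(\mu,k)=\int_U f(u)\,e^{\mu(H(u^{-1}k))}\,du,\qquad \tilde F(\mu,k)=\int_U F(u)\,e^{\mu(H(u^{-1}k))}\,du,$$
where $du$ is the normalized Haar measure on $U$. I insert $f(u)=\sum_{v\in K/K_0}F(uv)$ and, in the $v$-th summand, change variables $u\mapsto uv^{-1}$ (Haar on $U$ is bi-invariant). The kernel transforms into $e^{\mu(H(vu^{-1}k))}$, which equals $e^{\mu(H(u^{-1}k))}$ by the left $K_\C$-invariance of $H$ together with $v\in K\subset K_\C$. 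Each of the $c=[K:K_0]$ summands therefore collapses to $\tilde F(\mu,k)$, giving $\tilde f(\mu,k)=c\,\tilde F(\mu,k)$ for all $\mu\in\fa_\C^*$ and $k\in K_0$.

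I anticipate no substantive obstacle: the only nontrivial ingredient is the right $K$-invariance of the kernel on the support, which is immediate from the left $K_\C$-invariance of the holomorphic Iwasawa projection of Lemma~\ref{l: complex Iwasawa map} once Step~1 ensures $u^{-1}k\in\Eea$ throughout the domain of integration.
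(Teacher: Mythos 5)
Your proof is correct and follows the same route as the paper: pull the integrals back to $U$, substitute $f(u)=\sum_{v\in K/K_0}F(uv)$, change variables $u\mapsto uv^{-1}$ (right-invariance of Haar measure), and absorb $v$ using left $K_\C$-invariance of the complexified Iwasawa projection $H$. The paper's proof is just a compressed version of exactly this calculation. One small remark: your ``support equivalence'' paragraph dispatches the ``if and only if'' by invoking the extra hypothesis $\Exp(\bar B_R(0))\subset U\cap\Ee$, which is part of Theorem~\ref{t: into} but not literally assumed in the lemma as stated; the iff itself follows more directly from $\supp f=\supp F\cdot K$ (lifted to $U$) together with the right $K$-invariance of $\Ee$ that you already established, without any smallness assumption on $r$ beyond the covering condition.
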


\begin{proof} It follows from the definition of the map $F\mapsto f$ that
$$\tilde f(\mu,k)=\int_U \sum_{v\in K/K_0} F(uv)e^{\mu(H(u^{-1}k))}\,du
=c\tilde F(\mu,k)
$$
by right-$K$-invariance of the
Haar measure and left-$K$-invariance of $H$.\qed
\end{proof}

We can now give the proof of Theorem \ref{t: into}.

\smallskip

\begin{proof}
Property (a) in Definition \ref{d: PW space}
follows immediately
from the fact that the Fourier transform is $K$-equivariant.
Moreover, the transformation law for the
Weyl group in Property (c) follows easily
from Lemma \ref{l: A on Poisson} by integration
over $U\cap\Ee$ against $f(u)$.

For the proof of Property (b), with $r$ bounded by a suitable value $R$,
we reduce to the case that $K$ is connected.
We assume that $R$ is sufficiently small as
described above Lemma \ref{l: K to K0}.
Then according to the lemma, given a function $f\in  C^\infty_{K,r}(U/K)$,
the function  $F\in C^\infty_{K_0,r}(U/K_0)$ has the same
Fourier transform up to a constant. The reduction now follows
since $\PW_{K,r}(\fa)=\PW_{K_0,r}(\fa)$,
as mentioned below Definition \ref{d: PW space}.
For the rest of this proof we assume $K=K_0$.

It is known from \cite{OS}, Thm. 4.2(i),
that the estimate in Property (b)
holds for $K$-invariant functions on $U/K$.
We prove the property in general by reduction to that case.
In particular, we can
use the same value of $R>0$ (see \cite{OS}, Remark 4.3).

Fix an irreducible $K$-representation $(\delta,V_\delta)$.
It suffices to prove the result for functions $f$ that
transform isotypically under $K$ according to this type.

We shall use Kostant's description in \cite{Kostant}
of the $K$-types
in the spherical principal series. We draw the results we
need directly from the exposition in
\cite{GASS}, Chapter 3.
In particular, we denote by $H^*_\delta$ the
finite dimensional subspace of
the enveloping algebra $\cU(\fg)$ which is the image
under symmetrization  of the space of harmonic polynomials
on $\fp$ of type $\delta$, and we denote by
$E_\delta$ the space
$$E_\delta=\Hom_K(V_\delta,H^*_\delta),$$
of linear $K$-intertwining maps
$V_{\delta}\to H^*_\delta$.
It is known that $E_\delta$ has the same dimension as $V^M_\delta$.

We denote by $\Hom^*(V_\delta^M,E_\delta)$ the space
of {\it anti-linear} maps $V_\delta^M\to E_\delta$.
The principal result we need is Theorem 2.12 of \cite{GASS}, p. 250,
according to which there exists
a  rational function
$P=P^\delta$ on $\fa_\C^*$ with values
in $\Hom^*(V_\delta^M,E_\delta)$
such that
\begin{equation}
\label{e: Kostant}
\int_{K/M} e^{-(\lambda+\rho)H(x^{-1}k)}
\langle v,\delta(k)v'\rangle\,dk
=
[L(P(\lambda)(v')(v))\varphi_\lambda](x)
\end{equation}
for all $v\in V_\delta$, $v'\in V_{\delta}^M$ and $x\in G/K$,
and for $\lambda\in\fa_\C^*$ away
from the singularities of $P(\lambda)$.
Here $L$ denotes the action of the enveloping algebra
from the left on functions on $G/K$, and
$\varphi_\lambda$ denotes the spherical function
$$
\varphi_\lambda(x)=
\int_{K/M} e^{-(\lambda+\rho)H(x^{-1}k)}\,dk$$
on $G/K$.

The equality
(\ref{e: Kostant}) is valid for $x\in U\cap\Ee$
by analytic continuation. Let $f\in C^\infty_{K,r}(U/K)_\delta$,
where $r<R$ and the subscript $\delta$ indicates that $f$
is $K$-finite of this type.
Then
$$f(x)=d(\delta)\int_K \chi_\delta(l) f(lx)\,dl$$
for all $x\in U$, where $\chi_\delta$ is the character of $\delta$.
It follows that
$$
\tilde f(\mu,k)= d(\delta)
\int_{U/K}\int_K \chi_\delta(l) f(lu) \, dl \,\,e^{\mu(H(u^{-1}k))} \,du
$$
and hence by Fubini and invariance of measures
$$\tilde f(\mu,k)= d(\delta)
\int_{U/K}\int_{K/M}\int_M \chi_\delta(lmk^{-1}) \,dm\,
e^{\mu(H(u^{-1}l))} \, dl\, f(u)  \,du.
$$
The inner expression
$\int_M \chi_\delta(lmk^{-1})\,dm$
is a finite sum of matrix coefficients of the form
$\langle \delta(l)v,\delta(k)v'\rangle$ with $v\in V_\delta$ and
$v'\in V^M_{\delta}$, and hence it follows from (\ref{e: Kostant})
that $\tilde f(\mu,k)$ for generic $\mu\in\fa^*_\C$
is a finite sum of expressions of the form
$$\int_{U/K} [L(P(-\mu-\rho)(\delta(k)v')(v))
\varphi_{-\mu-\rho}](u) f(u)\,du$$
with $v$ and $v'$ independent of $\mu$ and $k$.
In these expressions the right invariant differential operators
$L(P(-\mu-\rho)(\delta(k)v')(v)$ can be thrown over, by taking
adjoints.
Since the spherical function is $K$-invariant, we finally obtain
\begin{equation}
\label{e: Lf}
\int_{U/K}
\varphi_{-\mu-\rho}(u)
\int_K [L(P(-\mu-\rho)(\delta(k)v')(v))^*f](yu)\,dy \,du.
\end{equation}

Notice that
(\ref{e: Lf}) is the spherical Fourier transform
from \cite{OS}, Section~6.
It follows that $\tilde f(\mu,k)$, for $\mu$ generic and $k\in K$,
is a finite sum in which
each term has the form of the spherical Fourier transform
applied to the $K$-integral of a derivative of $f$ by a differential
operator with coefficients that depends rationally on $\mu$
and continuously on $k$.
The application of
a differential operator to $f$ does not increase the support,
hence it follows from the estimates in \cite{OS} that each
term is a rational multiple of a function of $\mu$ of
exponential type, with estimates which are
uniform with respect to $k$. It then follows from
\cite{GASS} Lemma 5.13, p. 288, and its proof,
that the Fourier transform
$\tilde f(\mu, k)$ itself is of the same exponential type.
We have established Property (b) in Definition \ref{d: PW space}
for $\tilde f$.
\qed
\end{proof}

\eqsection{Fourier transform maps onto Paley-Wiener space}
\noindent
Let $\varphi\in \PW_{K,r}(\fa)$ for some $r>0$ and consider
the function $f$ on $U/K$ defined by the Fourier series
\begin{equation}
\label{e: inverse PW}
f(x)=
\sum_{\mu\in\Lambda^+(U/K)} d(\mu)
\int_{K/M} \varphi(-\mu-2\rho,k)  \,
\langle\pi_\mu(k)v_\mu,\pi_\mu(x)e_\mu \rangle \,dk.
\end{equation}
It follows from the estimate in Property (b) of
Definition \ref{d: PW space} that the sum converges
and defines a smooth function on $U/K$ (see \cite{Sugiura}).

\begin{thm}
\label{t: onto}
There exists a number $R>0$ such that
$\Exp(\bar B_R(0))\subset U\cap\Ee$ and such that
the following holds
for every $r<R$.
For each $\varphi\in \PW_{K,r}(\fa)$
the function $f$ on $U/K$ defined by (\ref{e: inverse PW})
belongs to
$C^\infty_{K,r}(U/K)$ and has Fourier transform
$\tilde f=\varphi$.
\end{thm}

\begin{proof}
Again we first reduce to the case that $K$ is connected.
Assuming that the theorem is valid in that case, we find
a number $R>0$ such that
every function $\varphi\in \PW_{K_0,r}(\fa)$, where $r<R$,
is of the form $\tilde F$ for some $F\in  C^\infty_{K_0,r}(U/K_0)$.
We may assume that $R$ is as small as explained above
Lemma~\ref{l: K to K0}. Let $\varphi\in \PW_{K,r}(\fa)$ be given
and recall that $\PW_{K,r}(\fa)=\PW_{K_0,r}(\fa)$. Let
$F\in  C^\infty_{K_0,r}(U/K_0)$ with $\tilde F=c^{-1}\varphi$,
and construct $f\in  C^\infty_{K,r}(U/K)$ as in Lemma
\ref{l: K to K0}. It follows from the lemma that
$\tilde f=c\tilde F=\varphi$,
and then it follows from Corollary \ref{c: Sherman}
that $f$ is the  function given by (\ref{e: inverse PW}).
This completes the reduction.

For the rest of this proof, we assume that $K=K_0$.
The value of $R$ that we shall use
is the same as in \cite{OS}, Thm. 4.2(ii) and Remark 4.3.
We may assume that $\varphi(\gl,\cdot)$ is isotypical of a given
$K$-type $\delta$ for all $\gl\in\fa^*_\C$.

For $v\in V_\delta$ and $v'\in V^M_\delta$
we denote by $\psi_{v,v'}$
the matrix coefficient
$$\psi_{v,v'}(k)=\langle v,\delta(k)v'\rangle$$
on $K/M$. By the Frobenius reciprocity
theorem it follows that
these functions $\psi_{v,v'}$ span the space
$\cH_\delta$. Moreover, it follows from the definition of
the standard intertwining operators
by means of integrals over quotients of
$\theta(N)$, that these operators
act on each function $\psi_{v,v'}$
only through the second variable. That is,
there exists a linear map
$$B(w,\lambda)\colon V^M_\delta \to V^M_\delta$$
such that
\begin{equation}
\label{e: A on psi}
\cA(w,\lambda)\psi_{v,v'}=\psi_{v,B(w,\lambda)v'}.
\end{equation}
for all $v,v'$.
Notice that the dependence of $B(w,\lambda)$ on $\lambda$
is anti-meromorphic.

It follows (by using a basis for $V_\delta$)
that we can write $\varphi(\mu,k)$
as a finite sum of functions of the form
$$\psi_{v,v'(\mu)}(k)$$
where $v\in V_\delta$ is fixed and where
$v'\colon \fa^*_\C\to V_\delta^M$ is anti-holomorphic
of exponential type $r$ and satisfies the
transformation relation in Definition \ref{d: PW space} (c), that is,
\begin{equation}
\label{e: W on v'}
v'(w(\mu+\rho)-\rho)=B(w,-\mu-\rho)v'(\mu)
\end{equation}
for $w\in W$.

Since the Poisson transformation for $G/K$ is equivariant
for the left action and injective for generic $\lambda$,
it follows from (\ref{e: Kostant}), by applying the inverse
Poisson
transform on both sides, that
\begin{equation}
\label{e: Kostant for psi}
\psi_{v,v'}=
\sigma_\lambda(P(\lambda)(v')(v))1
\end{equation}
for all $v\in V_\delta$, $v'\in V_\delta^M$
(see also \cite{GASS}, Thm. 3.1, p. 251), and
for all $\lambda$ for which
$P(\lambda)$ is non-singular. Here $1$ denotes the
constant function with value 1 on $K/M$.
We apply (\ref{e: Kostant for psi}) for $\lambda=-\mu-\rho$
generic
to the function
$\psi_{v,v'(\mu)}$
and thus obtain
our Paley-Wiener function $\varphi(\mu,\cdot)$
as a finite sum of
elements of the form
$$
\sigma_{-\mu-\rho}(P(-\mu-\rho)(v'(\mu))(v))1.
$$

The functions $P\colon \fa^*_\C \to \Hom^*(V^M_\gl,E_\delta)$
satisfy the following transformation property
\begin{equation}
\label{e: W on P}
P(w\lambda)\circ B(w,\lambda)=P(\lambda).
\end{equation}
Indeed, it follows from
(\ref{e: Kostant for psi}), (\ref{e: A on psi})
and (\ref{e: intertwining property of A})
that
$$\sigma_{w\lambda}(P(w\lambda)(B(w,\lambda)v')(v))1
=\sigma_{w\lambda}(P(\lambda)(v')(v))1$$
for all $v$ and $v'$, and generic $\lambda$. The identity
(\ref{e: W on P}) follows, since the map $u\mapsto \sigma_\nu(u)1$
is injective from $H^*_\delta$ to $\cH$ for generic $\nu$
according to \cite{GASS}, Thm. 3.1, p. 251
(alternatively, (\ref{e: W on P}) follows from
\cite{GASS}, Thm. 3.5, p. 254).

It follows from (\ref{e: W on P}) combined with (\ref{e: W on v'})
that the function
$$\mu\mapsto u(\mu):=P(-\mu-\rho)(v'(\mu))(v)\in
H^*_\delta$$ satisfies
$u(w(\mu+\rho)-\rho)=u(\mu)$ for generic  $\mu$, that
is, the shifted function $\lambda\mapsto u(\lambda-\rho)$
is $W$-invariant. Notice that $u$ is a rational multiple of
a holomorphic function of $\mu$,
since $P(-\mu-\rho)$ is antilinear in $v'$, and $v'$
is antiholomorphic in~$\mu$.

It follows from \cite{GASS}, Prop. 4.1, p. 264, that
$\lambda\mapsto P(-\lambda)$ is non-singular
on an open neighborhood of the set where
$$\Re\langle\lambda,\alpha\rangle\geq 0$$
for all roots $\alpha\in\Sigma^+$. Hence
$u(\lambda-\rho)$ is holomorphic on this set.
By the above-mentioned $W$-invariance
the function is then  holomorphic everywhere. Since it is a
rational multiple of a function of exponential type $r$,
we conclude from \cite{GASS}, Lemma 5.13, p. 288,
that it has exponential type $r$.

Since $H^*_\delta$ is finite dimensional
we thus obtain an expression for
$\varphi(\lambda,\cdot)$ as a finite sum
of functions of the form
$$\varphi_i(\lambda) \sigma_\lambda(u_i)1,$$
with scalar valued functions $\varphi_i$ on $\fa^*_\C$
which are $W$-invariant
(for the action twisted by $\rho$)
and of exponential type $r$,
and with $u_i\in H^*_\delta$.

According to the theorem proved in \cite{OS}, each function
$\varphi_i$ is the spherical Fourier transform of a
$K$-invariant smooth function $f_i\in C^\infty_r(U/K)$.
The function $L(u_i)f_i$ also belongs to
$C^\infty_r(U/K)$, and by Lemma \ref{l: u-homo}
it has Fourier transform $\varphi_i(\lambda)\sigma_\lambda(u_i)1$.
We conclude that if $f$ is the sum of the $L(u_i)f_i$, then
$\tilde f=\varphi$, as desired.

Finally, it follows from Corollary \ref{c: Sherman}
that $f$ is identical to
the function given by the Fourier series (\ref{e: inverse PW}).
\qed\end{proof}

We combine Theorems \ref{t: into} and \ref{t: onto} to obtain
the following.

\begin{thm}
\label{t: main}
There exists a number $R>0$ such that the Fourier transform
is a bijection of $C^\infty_{K,r}(U/K)$ onto
$\PW_{K,r}(\fa)$ for all $r<R$.
\end{thm}

We note the following corollary, which is analogous to a
result of Torasso in the non-compact case (see
\cite{GASS}, Cor. 5.19, p. 291).

\begin{cor}
There exists $r>0$ such that each function in
$C^\infty_{K,r}(U/K)$ is a finite linear combination of
derivatives of $K$-invariant
functions in $C^\infty_r(U/K)$ by members
of $\,\mathcal U(\fg)$, acting from the left.
\end{cor}

\begin{proof} More precisely, the proof above shows that
if $f\in C^\infty_{K,r}(U/K)$ is $K$-finite
of isotype $\delta$, then $f=\sum_i L(u_i)f_i$
with $u_i\in H^*_\delta$ and $f_i\in C^\infty_r(U/K)^K$.
\end{proof}

\eqsection{Final remarks}\noindent
Every function $f\in C^\infty(U/K)$ can be expanded in
a sum of $K$-types,
\begin{equation}
\label{e: K-type expansion}
f=\sum_{\delta\in\widehat K} f_\delta
\end{equation}
where $f_\delta\in C^\infty_\delta(U/K)$
is obtained from $f$ by left convolution
with the character of $\delta$ (suitably normalized).
It is easily seen that $f$ is supported in a given
closed geodesic ball $B$ around $x_0$, if and only if
each $f_\delta$ is supported in $B$.
The following is then a consequence of Theorem
\ref{t: main}.

\begin{cor}\label{c: non-K-finite}
There exists $R>0$ with the following
property. Let $f\in C^\infty_R(U/K)$ and $r<R$.
Then $f\in C^\infty_r(U/K)$ if and only if
the Fourier transform $\tilde f_\delta$ of
each of the functions $f_\delta$ allows a
holomorphic continuation satisfying the
growth estimate (b) of Definition \ref{d: PW space}
(with constants depending on $\delta$).
\end{cor}

For example, in the case of the sphere $S^2$, the expansion
(\ref{e: K-type expansion}) of $f$ reads $f=\sum_{m\in\mathbb Z} f_m$,
and the Fourier transform of $f_m$ is the map
\begin{equation}
\label{e: spherical harmonic}
l\mapsto \left\{\begin{array}{cl} c_{l,m} & \mathrm{for}\, \, l\geq |m|\cr
0&\mathrm{for}\, \,  0\leq l<|m|
\end{array}
\right.
\end{equation}
where $c_{m,l}$ are the coefficients of
the spherical harmonics expansion
$$f=\sum_{l=0}^\infty (2l+1)\sum_{|m|\leq l} c_{l,m}Y^m_l.$$
The condition in Corollary
\ref{c: non-K-finite} is thus that the map
(\ref{e: spherical harmonic}) has a holomorphic extension
to $l\in\mathbb C$ of the proper exponential type, for each $m\in\mathbb Z$.

It is an obvious question, whether the assumption of
$K$-finiteness can be removed in Theorem \ref{t: main}.
It is not difficult to remove it from Theorem \ref{t: onto}.
Assume that $\varphi$ satisfies Properties (b) and (c)
in Definition \ref{d: PW space} for a suitably
small value of $r$. Define a function $f:U/K\to \mathbb{C}$
by (\ref{e: inverse PW}). Using the arguments from \cite{Sugiura,taylor}
it follows that  $f\in C^\infty(U/K)$. By expanding
$f$ as in (\ref{e: K-type expansion})
it follows from Corollary \ref{c: non-K-finite}
that $f$ has support inside the ball of radius $r$.
It also follows that $\tilde f=\varphi$.

The nontrivial part would be to remove the assumption
from Theorem \ref{t: into}.  At this point we do not know if the Fourier
transform actually maps all non-$K$-finite functions of small support
into the space of functions satisfying the estimate in
Property (b). The ingredients in our proof, in particular
the matrices $P(\lambda )$, depend on the $K$-types. We would like
to point out that for the noncompact dual $G/K$, this direction
is proved in \cite{GASS}, p. 278, using the Radon transform.
It has been suggested to us by Simon Gindikin that \cite{Simon}
might be used in such an argument for $U/K$.


\begin{thebibliography}{XX}
\newcount\refcount
\def\refdef{\number\refcount}
\def\nxref{\advance\refcount by 1}
\refcount=1

\edef\refAG90{\refdef}\nxref\bibitem[\refAG90]{AG90}
D. N. Akhiezer and S. Gindikin, On
Stein extensions of real symmetric spaces,
\textit{Math. Ann.} \textbf{286} (1990), 1--12.


\edef\refBOP{\refdef}\nxref\bibitem[\refBOP]{BOP}
T. Branson, G. \'Olafsson and A. Pasquale,
The Paley--Wiener theorem and the local Huygens'
principle for compact symmetric spaces: The even multiplicity
case, {\it
Indag. Mathem.} {\bf 16} (2005), 393--428.

\edef\refBOP2{\refdef}\nxref\bibitem[\refBOP2]{BOP2}
T. Branson, G. \'Olafsson and A. Pasquale,
The Paley--Wiener theorem for the Jacobi
transform and the local Huygens' principle for root
systems with even multiplicities, {\it
Indag. Mathem.} {\bf 16} (2005), 429--442.

\edef\refC06{\refdef}\nxref\bibitem[\refC06]{C06}
R. Camporesi,
The spherical Paley--Wiener theorem on the complex Grassmann
manifolds $\rmS\rmU(p+q)/\rmS(\rmU_p\times \rmU_q)$, {\it
Proc. Amer. Math. Soc.} {\bf 134} (2006), 2649--2659.

\edef\refclerc{\refdef}\nxref\bibitem[\refclerc]{clerc}
J.-L. Clerc,
Une formule asymptotique du type Melher-Heine
pour les zonales d'un espace riemannien sym\'etrique, {\it
Studia Math.} {\bf 57} (1976), 27--32.



\edef\refSimon{\refdef}\nxref\bibitem[\refSimon]{Simon}
S. Gindikin, The horospherical Cauchy-Radon transform on compact
symmetric spaces.
{\it Mosc. Math. J.} {\bf 6}  (2006),  299--305.



\edef\refGKr{\refdef}\nxref\bibitem[\refGKr]{GKr}
S. Gindikin and B. Kr\"otz, Invariant Stein domains in Stein
symmetric spaces and a nonlinear complex convexity theorem.
{\it Int. Math. Res. Not.} {\bf 2002} (2002), 959--971.


\edef\refGM{\refdef}\nxref\bibitem[\refGM]{GM}
S. Gindikin and T. Matsuki,
Stein extensions of
Riemannian symmetric spaces and dualities
of orbits on flag manifolds.
{\it Transformation groups} {\bf 8} (2003), 333--376.




\edef\refG01{\refdef}\nxref\bibitem[\refG01]{G01}
F. B. Gonzalez,
A Paley--Wiener theorem for central functions on compact
Lie groups, {\it
Contemp. Math.} {\bf 278} (2001), 131--136.


\edef\refSig73{\refdef}\nxref\bibitem[\refSig73]{Sig73}
S. Helgason, The surjectivity of invariant differential operators
on symmetric spaces,
{\it Ann. of Math.}, {\bf 98} (1973),
451--480.

\edef\refGASS{\refdef}\nxref\bibitem[\refGASS]{GASS}
S. Helgason, {\it Geometric analysis on symmetric spaces},
A.M.S., Providence, RI, 1994.

\edef\refGGA{\refdef}\nxref\bibitem[\refGGA]{GGA}
S. Helgason, {\it Groups and Geometric Analysis},
A.M.S., Providence, RI, 2000.

\edef\refSig{\refdef}\nxref\bibitem[\refSig]{Sig}
S. Helgason, {\it Differential geometry, Lie groups, and
Symmetric Spaces},
A.M.S., Providence, RI, 2001.

\edef\refH02{\refdef}\nxref\bibitem[\refH02]{H02} A. Huckleberry,
On certain domains in cycle spaces of flag manifolds.
\textit{Math. Ann.}  \textbf{323}  (2002), 797--810

\edef\refKostant{\refdef}\nxref\bibitem[\refKostant]{Kostant}
B. Kostant, On the existence and irreducibility of
certain series of representations. In:
{\it Lie groups and their representations}, edited by I. M. Gelfand,
(Proc. Summer School Budapest, 1971),
pp. 231-329. Halsted and Wiley, New York, 1975.

\edef\refKrSt{\refdef}\nxref\bibitem[\refKrSt]{KrSt}
B. Kr\"otz and R. Stanton,
Holomorphic extensions of representations.
I. Automorphic functions, {\it
Ann. of Math} \textbf{159}  (2004), 641--724.

\edef\refkos{\refdef}\nxref\bibitem[\refkos]{kos} B. Kr\"otz, G. \'Olafsson and R. Stanton,
The image of the heat kernel transform on
Riemannian symmetric spaces of the noncompact type,
\textit{Int. Math. Res. Not.} \textbf{22} (2005), 1307--1329.

\edef\refM03{\refdef}\nxref\bibitem[\refM03]{M03}
T. Matsuki, Stein extensions of Riemann symmetric spaces and
some generalization.
\textit{J. Lie Theory}  \textbf{13}  (2003), 565--572

\edef\refOS{\refdef}\nxref\bibitem[\refOS]{OS}
G. \'Olafsson and H. Schlichtkrull,
A local Paley-Wiener theorem for
compact symmetric spaces. {\it
Adv. in Math.} \textbf{218} (2008), 202--215.

\edef\refOS08a{\refdef}\nxref\bibitem[\refOS08a]{OS08a}
G. \'Olafsson and H. Schlichtkrull, Representation theory, Radon transform and the heat
equation on a Riemannian symmetric space. \textit{Group Representations,
Ergodic Theory, and Mathematical Physics; A Tribute to George W. Mackey}. In:  \textit{Contemp.
Math.}, \textbf{449} (2008), 315--344.

\edef\refOS08b{\refdef}\nxref\bibitem[\refOS08b]{OS08b}
G. \'Olafsson and H. Schlichtkrull,
A local Paley-Wiener theorem for distributions on compact
symmetric spaces. To appear in \textit{Math. Scand.}

\edef\refShermanTAMS{\refdef}\nxref\bibitem[\refShermanTAMS]{ShermanTAMS}
T. O. Sherman,
Fourier analysis on the sphere, {\it
Trans. Amer. Math. Soc.} {\bf 209} (1975), 1--31.

\edef\refShermanBull{\refdef}\nxref\bibitem[\refShermanBull]{ShermanBull}
T. O. Sherman,
Fourier analysis on compact symmetric space, {\it
Bull. Amer. Math. Soc.} {\bf 83} (1977), 378-380.

\edef\refShermanActa{\refdef}\nxref\bibitem[\refShermanActa]{ShermanActa}
T. O. Sherman, The Helgason
Fourier transform for compact Riemannian symmetric spaces of rank one,
{\it Acta Math.} {\bf 164} (1990), 73--144.

\edef\refStanton{\refdef}\nxref\bibitem[\refStanton]{Stanton}
R. J. Stanton, Mean convergence of Fourier series on compact Lie groups,
{\it Trans. Amer. Math. Soc.} {\bf 218} (1976), 61--87.


\edef\refStrichartz{\refdef}\nxref\bibitem[\refStrichartz]{Strichartz}
R. S. Strichartz, Local harmonic analysis on spheres,
{\it J. Funct. Anal.} {\bf 77} (1988), 403--433.


\edef\refSugiura{\refdef}\nxref\bibitem[\refSugiura]{Sugiura}
M. Sugiura,
Fourier series of smooth functions on compact Lie groups, {\it
Osaka Math. J.} {\bf 8} (1971), 33--47.

\edef\reftaylor{\refdef}\nxref\bibitem[\reftaylor]{taylor}
M. E. Taylor, Fourier series on compact Lie groups,
{\it Proc. Amer. Math. Soc.} {\bf 19} (1968), 1103--1105.

\edef\refTorasso{\refdef}\nxref\bibitem[\refTorasso]{Torasso}
P. Torasso, Le th\'eoreme de Paley-Wiener pour l'espace
des fonctions ind\'efiniment differentiables
et a support compact sur un espace sym\'etrique
de type non compact,
{\it J. Funct. Anal.} {\bf 26} (1977), 201--213.

\edef\refW72{\refdef}\nxref\bibitem[\refW72]{W72}
N. R.  Wallach,  Kostant's $P\sp{\gamma }$ and $R\sp{\gamma }$ matrices
and intertwining integrals.  \textit{Harmonic analysis on homogeneous spaces}
(Proc. Sympos. Pure Math., Vol. XXVI, Williams Coll.,
Williamstown, Mass., 1972),  pp. 269--273. Amer. Math. Soc., Providence, R.I., 1973.

\edef\refY{\refdef}\nxref\bibitem[\refY]{Y}
J.-G. Yang, A proof of a formula in Fourier analysis on the sphere,
{\it Proc. Amer. Math. Soc.} {\bf 88} (1983), 602--604.



\end{thebibliography}
\end{document}